\numberwithin{equation}{section}
\newtheorem{theorem}{Theorem}
\newtheorem{lemma}{Lemma}
\newtheorem{proposition}{Proposition}
\newcommand{\comment}[1]{}
\title[Nearest neighbor spacing distributions for zeros]%
      {Nearest neighbor spacing distributions for zeros of the real or imaginary part of the Riemann xi-function on vertical lines} 
\author[M. Suzuki]{Masatoshi Suzuki}
\date{Version of \today}
\subjclass[2000]{11M26, 11M50, 11N64 }
\keywords{}
\begin{abstract}
We show that the density functions of nearest neighbor spacing distributions for zeros 
of the real or imaginary part of the Riemann xi-function on vertical lines 
are described by the $M$-function which is appeared in value distributions 
of the logarithmic derivative of the Riemann zeta-function on vertical lines.  
\end{abstract}
\begin{document}

\section{Introduction}

Let $s=\sigma+it$ ($i=\sqrt{-1}$) be a complex variable, 
$\zeta(s)$ be the Riemann zeta-function, and 
\[
\xi(s)=\frac{1}{2}s(s-1)\pi^{-s/2}\Gamma\left(\frac{s}{2}\right)\zeta(s)
\]
be the Riemann xi-function, which is an entire function 
satisfying functional equations $\xi(s)=\xi(1-s)$ and $\xi(\bar{s})=\overline{\xi(s)}$. 
In this paper, we discuss the distributions of zeros of entire functions 
\begin{equation} \label{AB}
A_\omega(s) :=  \frac{1}{2}(\xi(s+\omega)+\xi(s-\omega)), \quad B_\omega(s) := \frac{i}{2}(\xi(s+\omega)-\xi(s-\omega))
\end{equation}
having a positive real parameter $\omega$ 
in consideration of the following two relations with the zeros of $\xi(s)$. 
Firstly, the zeros of $A_\omega(s)$ and $B_\omega(s)$ on the line $\sigma=1/2$ 
coincide respectively with the zeros of the real and imaginary parts of $\xi(s)$ 
on the line $\sigma=1/2+\omega$, 
because we have 
\begin{equation} \label{0807_3}
\aligned
{\rm Re}\,\xi(\tfrac{1}{2}+\omega+it) = A_\omega(\tfrac{1}{2}+it), \quad 
{\rm Im}\,\xi(\tfrac{1}{2}+\omega+it) = -B_\omega(\tfrac{1}{2}+it)
\endaligned
\end{equation}
by functional equations of $\xi(s)$. 
Secondly, for small $\omega>0$, the zeros of $A_\omega(s)$ and $B_\omega(s)$ 
(locally) approximate the zeros of $\xi(s)$ and $\xi'(s)$ respectively, 
because of asymptotic relations 
\[
A_\omega(s) = \xi(s) + O(\omega^2),  \quad 
B_\omega(s) = i\omega \cdot \xi'(s) + O(\omega^3) \quad (\omega \to 0^+)
\]
on compact subsets of $\mathbb C$.

The functional equations of $\xi(s)$ deduce that $A_\omega(s)$ and $B_\omega(s)$ satisfy
\[
A_\omega(s)=A_\omega(1-s), \quad B_\omega(s)=-B_\omega(1-s) 
\]
and take real values on the critical line $\sigma=1/2$. 
It is known that all zeros of $A_\omega(s)$ and $B_\omega(s)$ are simple zeros lying on the critical line 
if $\omega \geq 1/2$. 
This holds also for $0<\omega<1/2$ 
if we assume the Riemann Hypothesis (RH) for $\xi(s)$ (\cite[Theorem 2.1]{Jeff05}), 
or unconditionally, except for a set of zeros up to height $T$ of
cardinality $\ll T^{1-a\omega}(\log T)^2$ for any $a < 1$ (\cite[Theorem 1 and 2]{Li09}). 
In this sense, the horizontal distributions of the zeros of $A_\omega(s)$ and $B_\omega(s)$ are understood well. 
Therefore we turn interest to their vertical distributions in what follows. 
\medskip

Let $X_\omega(s)$ be $A_\omega(s)$ or $B_\omega(s)$. 
We arrange the zero $\rho=\beta+i\gamma$ of $X_\omega(s)$ with $\gamma>0$ 
in a sequence $\rho_n=\beta_n+i\gamma_n$ so that $\gamma_{n+1} \geq \gamma_n$. 
Then the distribution of spacings of the normalized imaginary parts 
\begin{equation} \label{0814_2}
\gamma_n^{(1)}:=  \frac{\gamma_n}{2\pi}\log\frac{\gamma_n}{2\pi e}
\end{equation}
converges to a limiting distribution of equal spacings of length one. 
This fact is proved in Lagarias \cite[Theorem 4.1]{Jeff05} by assuming RH 
if $0<\omega<1/2$ and in Li \cite[Theorem 1]{Li09} unconditionally.  
The above result on the normalized imaginary parts is contrast to the Montgomery--Odlyzko 
conjecture and the GUE conjecture  
which assert that the distribution of the normalized imaginary parts of the zeros of $\xi(s)$ 
obeys the distribution of eigenvalues of random hermitian matrices from the Gaussian Unitary Ensemble (GUE). 
Therefore, one may consider that the zeros of $A_\omega(s)$ and $B_\omega(s)$ 
are insignificant objects at least from the viewpoint of their vertical distributions. 

However, interestingly enough, it will be proved that the {\it second normalization} 
of the imaginary parts defined by
\begin{equation} \label{0731_1}
\gamma_{n}^{(2)}:= \Bigl( \frac{\gamma_n}{2\pi}\log\frac{\gamma_n}{2\pi e} - n \Bigr)
\varrho_\omega^{-1/2}
\frac{1}{2\pi}\log\frac{\gamma_n}{2\pi e}
\end{equation}
have a remarkable distribution which is related to the Euler product of the Riemann zeta-function, 
where 
\[
\varrho_\omega := \frac{1}{2\pi^2}\sum_{n=1}^{\infty}\frac{\Lambda(n)^2}{n^{1+2\omega}} 
\]
for the von Mangoldt function $\Lambda(n)$ 
and the series converges absolutely for $\omega>0$. 
\medskip

In order to state the main theorem, we recall a result on 
the value distributions of the logarithmic derivative of the Riemann zeta-function 
on vertical lines.   
For every $\sigma>1/2$, there exists a non-negative real valued $C^\infty$-function $M_\sigma(z)$ on $\mathbb C$ such that 
$(2\pi)^{-1}\int_{\mathbb C} M_\sigma(z)\, dz=1$ and the formula 
\begin{equation} \label{formula_IM}
\lim_{T\to\infty}\frac{1}{2T}\int_{-T}^{T}\Phi\left( \frac{\zeta'}{\zeta}(\sigma+it) \right) \, dt
= \frac{1}{2\pi} \int_{\mathbb C} M_{\sigma}(z)\Phi(z)\,dz
\end{equation}
holds for any continuous bounded function $\Phi(z)$ on $\mathbb C$ 
or the characteristic function of either a compact subset of $\mathbb C$ or the complement of such a subset. 
We call $M_\sigma(z)$ the {\it $M$-function} according to \cite{Ihara01}. 
The above formula was obtained by 
Kampen-Wintner~\cite{vKW37}, Kershner--Wintner~\cite{KW37}, Guo~\cite{Guo}, Ihara~\cite{Ihara01} and Ihara-Matsumoto~\cite{IM}
(see Appendix for a construction of $M_\sigma(z)$ and its historical details). 
If $\sigma>1$, formula \eqref{formula_IM} holds for any continuous function $\Phi(z)$ on $\mathbb C$. 

Using the $M$-function, we define the {\it $m$-function} by
\begin{equation} \label{formula_IM2}
m_\sigma(u)=\int_{-\infty}^{\infty}M_{\sigma}(u+iv) \, dv 
\end{equation}
on the real line. 
This is well-defined because $M_\sigma(z)$ is of rapid decay (\cite[Theorem 2]{Ihara01}). 

\noindent
Reflecting the Euler product formula of the Riemann zeta-function, 
the Fourier transform $\tilde{M}_\sigma(z)$ has an Euler product formula 
$\tilde{M}_\sigma(z)=\prod_{p} \tilde{M}_{\sigma,p}(z)$ 
whose local factors $\tilde{M}_{\sigma,p}(z)$ 
are some arithmetic Dirichlet series in $\sigma$, 
where $p$ runs over all prime numbers (see Appendix). 
Therefore, the Fourier transform of $m$-function also has an Euler product, 
since  
\[
\tilde{m}_\sigma(x)
=\frac{1}{2\pi}\int_{-\infty}^{\infty}m_\sigma(u)e^{i xu} du
= \frac{1}{2\pi}\int_{\mathbb C}M_\sigma(u+iv)e^{i xu} dudv 
= \tilde{M}_\sigma(x). 
\]

Now the main result is stated  as follows. 
\begin{theorem}\label{thm_01} 
Let $X_\omega(s)$ be $A_\omega(s)$ or $B_\omega(s)$ for given $\omega>0$, 
and let $\gamma_n^{(2)}$ be the secondary normalized imaginary parts of 
the zeros of $X_\omega(s)$ 
defined in \eqref{0731_1}. 
Then the formula 
\begin{equation} \label{0731_2}
\lim_{T \to \infty} \frac{1}{N_\omega(T)}\sum_{0<\gamma_n \leq T}\phi(\gamma_{n+1}^{(2)}-\gamma_{n}^{(2)})
= \frac{1}{2\pi}\int_{-\infty}^{\infty} \pi\varrho_{\omega}^{1/2}m_{\frac{1}{2}+\omega}(\pi \varrho_{\omega}^{1/2}u)\phi(u) \, du
\end{equation}
holds for any bounded function $\phi \in C^1({\mathbb R})$  
such that $\phi'(x) \ll 1$ for $|x| \leq 1$, 
$\phi'(x) \ll x^{-2}$ for $|x| \geq 1$ and 
$u \mapsto \tfrac{d}{du}\phi\bigl({\rm Re}\tfrac{\zeta'}{\zeta}(\tfrac{1}{2}+\omega+iu)\bigr)$ 
is bounded on $\mathbb R$, 
where $N_\omega(T)$ is the number of zeros of $X_\omega(s)$ with $0 < t \leq T$. 
\end{theorem}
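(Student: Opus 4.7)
The plan is to reduce the spacing $\gamma_{n+1}^{(2)}-\gamma_n^{(2)}$ to a single-zero functional of $\mathrm{Re}\,(\zeta'/\zeta)$ and then invoke \eqref{formula_IM}. Since $\omega>0$ and all zeros of $\xi$ lie in the critical strip, $\xi(1/2+\omega+it)$ is nonvanishing on $\mathbb R$, so I may write $\xi(1/2+\omega+it)=r(t)e^{i\theta_\omega(t)}$ with $\theta_\omega\in C^\infty(\mathbb R)$. By \eqref{0807_3}, the zeros $\gamma_n$ of $X_\omega(1/2+it)$ are precisely the solutions of $\theta_\omega(t)\equiv c\pmod\pi$, with $c=\pi/2$ for $A_\omega$ and $c=0$ for $B_\omega$; in particular $\theta_\omega(\gamma_{n+1})-\theta_\omega(\gamma_n)=\pi$. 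Differentiating $\log\xi$ and applying Stirling gives
\[
\theta_\omega'(t)=\mathrm{Re}\,\tfrac{\xi'}{\xi}(1/2+\omega+it)=\tfrac{1}{2}\log(t/2\pi)+R(t)+O(1/t),
\]
where $R(t):=\mathrm{Re}\,\tfrac{\zeta'}{\zeta}(1/2+\omega+it)$, and Taylor expansion around $\gamma_n$ yields $\gamma_{n+1}-\gamma_n=\pi/\theta_\omega'(\gamma_n)+O(\gamma_n^{-1}(\log\gamma_n)^{-3})$.

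Setting $f(t):=(t/2\pi)\log(t/2\pi e)$ so that $\gamma_n^{(1)}=f(\gamma_n)$ and $\pi f'(t)=\tfrac{1}{2}\log(t/2\pi)$, an expansion together with the discrepancy bound $f(\gamma_n)-n=O(\log\gamma_n)$ coming from the Riemann--von Mangoldt formula for $X_\omega$ (cf.\ \cite{Jeff05,Li09}) — which disposes of the telescoping remainder $(f(\gamma_n)-n)[f'(\gamma_{n+1})-f'(\gamma_n)]=O((\log\gamma_n)/\gamma_n)$ — produces the key reduction
\[
\gamma_{n+1}^{(2)}-\gamma_n^{(2)}=-\frac{R(\gamma_n)}{\pi\varrho_\omega^{1/2}}+\varepsilon_n,\qquad \varepsilon_n\ll\frac{1+R(\gamma_n)^2}{\log\gamma_n}.
\]
In other words, the scaling in \eqref{0731_1} is designed precisely so that the leading behaviour of $\gamma_{n+1}^{(2)}-\gamma_n^{(2)}$ is $-R(\gamma_n)/(\pi\varrho_\omega^{1/2})$.

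It then remains to evaluate the average. Writing $dN_\omega(t)=f'(t)\,dt+dS_\omega(t)$ with $S_\omega(t)=O(\log t)$ and integrating by parts — using the hypothesis that $u\mapsto\tfrac{d}{du}\phi(R(u))$ is bounded to control the $dS_\omega$ contribution — yields
\[
\frac{1}{N_\omega(T)}\sum_{0<\gamma_n\leq T}\phi\!\left(-\frac{R(\gamma_n)}{\pi\varrho_\omega^{1/2}}\right)=\frac{1}{T}\int_0^T \phi\!\left(-\frac{R(t)}{\pi\varrho_\omega^{1/2}}\right)dt+o(1).
\]
Applying \eqref{formula_IM} with $\Phi(z):=\phi(-\mathrm{Re}\,z/(\pi\varrho_\omega^{1/2}))$ (bounded continuous since $\phi$ is bounded) transforms the right-hand side into $\tfrac{1}{2\pi}\int_{\mathbb C}M_{1/2+\omega}(z)\Phi(z)\,dz$; integrating out $\mathrm{Im}\,z$ via \eqref{formula_IM2}, changing variable $u=-\pi\varrho_\omega^{1/2}w$, and invoking the parity $m_{1/2+\omega}(-w)=m_{1/2+\omega}(w)$ inherited from the $t\mapsto -t$ invariance of the $\zeta'/\zeta$-distribution produces exactly the right-hand side of \eqref{0731_2}.

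The principal obstacle is proving that $\tfrac{1}{N_\omega(T)}\sum|\phi(-R(\gamma_n)/(\pi\varrho_\omega^{1/2})+\varepsilon_n)-\phi(-R(\gamma_n)/(\pi\varrho_\omega^{1/2}))|=o(1)$. Because $|\varepsilon_n|$ may be of order $R(\gamma_n)^2/\log\gamma_n$ on exceptional zeros where $|R|$ is large, the decay hypothesis $\phi'(x)\ll x^{-2}$ for $|x|\geq 1$ is essential: it makes $|\phi'(-R/(\pi\varrho_\omega^{1/2}))|\cdot|\varepsilon_n|$ as small as $O(1/\log\gamma_n)$ in the large-$R$ regime, while the bounded-$R$ regime is handled by $\phi'\ll 1$; the residual contribution from very large $R$ is absorbed by the rapid decay of $M_\sigma$ noted in \cite{Ihara01}. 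The parallel sum-to-integral passage, though more routine, similarly relies on the boundedness of $\tfrac{d}{du}\phi(R(u))$ to tame the oscillatory term $\int\phi(g)\,dS_\omega$, which is the second delicate point.
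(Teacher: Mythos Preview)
Your approach is essentially the paper's: the phase $\theta_\omega$ is $\pi$ times the counting function $N_\omega$ up to a constant, so $\theta_\omega'(t)=\tfrac12\log(t/2\pi)+R(t)+O(1/t)$ is exactly the differential form of the Riemann--von~Mangoldt formula the paper uses, and your reduction $\gamma_{n+1}^{(2)}-\gamma_n^{(2)}\approx -R(\gamma_n)/(\pi\varrho_\omega^{1/2})$ matches Proposition~\ref{prop_2} (equation~\eqref{0821_4}) after the scaling. Your sum-to-integral step via $dN_\omega=f'\,dt+dS_\omega$ is Proposition~\ref{prop_1}, your application of \eqref{formula_IM} with $\Phi(z)=\phi(-\mathrm{Re}\,z/(\pi\varrho_\omega^{1/2}))$ is the paper's, and your two-regime treatment of $\phi'$ (bounded for $|x|\le1$, $\ll x^{-2}$ for $|x|\ge1$) mirrors the $I_1(T)/I_2(T)$ split in the proof of Proposition~\ref{prop_2}.

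There is, however, a genuine gap. Your opening sentence ``Since $\omega>0$ and all zeros of $\xi$ lie in the critical strip, $\xi(1/2+\omega+it)$ is nonvanishing on $\mathbb R$'' is \emph{false} unconditionally when $0<\omega<1/2$: the line $\sigma=\tfrac12+\omega$ then lies inside the critical strip, and the nonvanishing of $\xi$ there is precisely RH. Without it, $\theta_\omega$ is not globally defined as a smooth function, $R(t)$ can blow up at zeros of $\zeta(\tfrac12+\omega+it)$, the gap relation $\theta_\omega(\gamma_{n+1})-\theta_\omega(\gamma_n)=\pi$ fails near such points, and the zeros of $X_\omega$ need not even be simple or on the critical line. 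The paper deals with this in two stages: the argument you sketched is carried out under RH (Section~\ref{section_proofs} up to \S3.1), and then a separate unconditional proof (\S3.2) invokes \cite{Li09} to show that the zeros of $\zeta$ near $\sigma=\tfrac12+\omega$, the off-line/multiple zeros of $X_\omega$, and the $t$ where \eqref{0906_1} fails, are all confined to an exceptional set $E\subset(0,\infty)$ of logarithmic density zero, so that restricting the sum to $\gamma_n\notin E$ changes nothing in the limit and your phase analysis becomes valid on the complement. You need to either assume RH for $0<\omega<1/2$ or reproduce this exceptional-set argument.
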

The limit behavior of the integrand of the right-hand side of \eqref{0731_2} 
as $\omega \to 0^+$ is obtained 
as follows by using a result of \cite{Ihara02}. 
\begin{theorem}\label{thm_02} We have 
\[
\frac{1}{2\pi}\lim_{\omega \to 0^+} \pi\varrho_{\omega}^{1/2}m_{\frac{1}{2}+\omega}(\pi \varrho_{\omega}^{1/2}u) 
= \frac{1}{\sqrt{2\pi}} \exp\left( -\frac{u^2}{2} \right).
\]
\end{theorem}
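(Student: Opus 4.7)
\medskip\noindent
\textbf{Proof plan.} The plan is to pass to the Fourier side and reduce the claim to a central limit theorem for the $M$-function. Denote the left-hand side integrand by $g_\omega(u) := (2\pi)^{-1}\pi\varrho_\omega^{1/2} m_{1/2+\omega}(\pi\varrho_\omega^{1/2} u)$. A direct change of variables in the Fourier convention of the excerpt gives
\[
\tilde g_\omega(x) \;=\; \frac{1}{2\pi}\,\tilde m_{1/2+\omega}\bigl(x/(\pi\varrho_\omega^{1/2})\bigr),
\]
while the target $(2\pi)^{-1/2} e^{-u^2/2}$ has Fourier transform $(2\pi)^{-1} e^{-x^2/2}$. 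Since $m_\sigma$ is smooth and of rapid decay for $\sigma > 1/2$ (as it is an integral over $v$ of the rapidly decaying $C^\infty$ function $M_\sigma$), Fourier inversion reduces the theorem to
\[
\tilde m_{1/2+\omega}\bigl(x/(\pi\varrho_\omega^{1/2})\bigr) \longrightarrow e^{-x^2/2} \qquad (\omega \to 0^+)
\]
pointwise, and uniformly on compacta, in $x \in \R$.

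To interpret the normalization, recall that $(2\pi)^{-1} m_\sigma$ is the density of the real part of the limiting random variable attached to $\zeta'/\zeta(\sigma+it)$; the Euler product $\tilde m_\sigma(x) = \prod_p \tilde M_{\sigma,p}(x)$ expresses this variable as a sum of independent mean-zero contributions indexed by primes, whose per-prime variance equals $\tfrac{1}{2}(\log p)^2 p^{-2\sigma}/(1-p^{-2\sigma})$. Summing over $p$ and collecting prime powers gives total variance $\tfrac{1}{2}\sum_n \Lambda(n)^2/n^{2\sigma} = \pi^2 \varrho_{\sigma-1/2}$, so $\pi\varrho_\omega^{1/2}$ is precisely the standard deviation at $\sigma = 1/2+\omega$, and the rescaling in the theorem is the usual CLT normalization. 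Taking logarithms in the Euler product and expanding each local factor about $x=0$, the mean-zero property kills the linear term and one obtains
\[
\log \tilde m_\sigma(x) \;=\; -\tfrac{1}{2}\pi^2\varrho_{\sigma-1/2}\,x^2 + R_\sigma(x),
\]
with $R_\sigma(x) = O(|x|^3)$, the implied constant controlled by $\sum_p (\log p)^3/p^{3\sigma}$, a sum that converges uniformly as $\sigma \to (1/2)^+$ since $3\sigma > 3/2$. Substituting $x \mapsto x/(\pi\varrho_\omega^{1/2})$ turns the quadratic term into $-x^2/2$ exactly, while the asymptotic $\varrho_\omega \asymp \omega^{-2}$ (which comes from the double pole at $s=1$ of $\sum \Lambda(n)^2/n^s$) renders the remainder $O(\omega^3|x|^3) = o(1)$; higher moments contribute even smaller errors.

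The main obstacle is making this Taylor-and-sum manipulation rigorous in the critical regime $\sigma \searrow 1/2$: the per-prime variance contributions diverge (which is exactly why the normalization $\pi\varrho_\omega^{1/2}$ grows), so one needs uniform control of the tail of $\sum_p \log \tilde M_{\sigma,p}(x)$ in $x$ on compacta together with a Lindeberg-type domination that produces the Gaussian limit. This analytic work is the content of the central limit theorem for the $M$-function proved in \cite{Ihara02}, so the proof ultimately reduces to invoking that theorem and checking that its normalization matches $\pi\varrho_\omega^{1/2}$ computed above.
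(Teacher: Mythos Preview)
Your approach is the same as the paper's: pass to the Fourier side, identify $\pi\varrho_\omega^{1/2}$ with the standard deviation $\mu_\sigma^{1/2}/\sqrt{2}$ of the $M$-function, and invoke the central limit theorem $\tilde M_\sigma(\mu_\sigma^{-1/2}z)\to e^{-|z|^2/4}$ from \cite{Ihara02}. The algebra in your reduction is correct.

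There is, however, a genuine gap in the step ``Fourier inversion reduces the theorem to $\tilde m_{1/2+\omega}(x/(\pi\varrho_\omega^{1/2}))\to e^{-x^2/2}$ pointwise, and uniformly on compacta.'' Pointwise (or locally uniform) convergence of the characteristic functions $\tilde g_\omega$ does \emph{not} imply pointwise convergence of the densities $g_\omega$; a standard counterexample is $\tilde g_\omega(x)=e^{-\omega x^2}$, which tends to $1$ locally uniformly while the corresponding densities blow up to a Dirac mass. To pass the limit through the inverse Fourier integral you need a \emph{dominating} integrable bound on $\tilde g_\omega$ that is uniform in $\omega$. The paper supplies exactly this ingredient, also from \cite{Ihara02}: for $\sigma$ close enough to $1/2$ one has
\[
\bigl|\tilde M_\sigma(\sqrt{2}\,\mu_\sigma^{-1/2}z)\bigr|\le \exp\bigl(-\sqrt{2}\,|z|/8\bigr),
\]
and then Lebesgue's dominated convergence theorem finishes the argument. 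Your ``Lindeberg-type domination'' remark at the end is aimed at proving the CLT itself, not at justifying the interchange of limit and Fourier integral; these are two separate issues, and only the second is what stands between your outline and a complete proof.
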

Note that the above two theorems are unconditional.

We now make a consideration on a significance of Theorem \ref{thm_01} 
under RH if $0 < \omega < 1/2$. 
In this case, all zeros of $X_\omega(s)$ are simple zeros lying on the critical line and  
\begin{equation}\label{0425_1}
N_\omega(T) 
= \frac{T}{2\pi}\log\frac{T}{2\pi e}+S_\omega(T)+ \frac{7+2\omega}{8} + O\left( \frac{1}{T} \right),
\end{equation}
for $T \geq 2$ (\cite[Theorem 3.1]{Jeff05}), where 
\[
S_\omega(t) 
= \frac{1}{\pi} \arg \zeta(\tfrac{1}{2}+\omega+it) 
\]
is a $C^\infty$-function on the real line obtained by continuous variation along the straight lines joining 
$2$, $2+it$ and $1/2+\omega+it$, starting with the value $0$. 
By the simplicity of zeros, \eqref{0814_2} and \eqref{0425_1}, we have 
\[
1 = N_\omega(\gamma_{n+1})-N_\omega(\gamma_n) 
= \gamma_{n+1}^{(1)} - \gamma_{n}^{(1)} + S_\omega(\gamma_{n+1}) - S_\omega(\gamma_n) + O\left( \frac{1}{\gamma_{n}} \right), 
\]
and thus
\begin{equation} \label{0814_3}
\gamma_{n+1}^{(1)} - \gamma_{n}^{(1)} - 1 
=  - \Bigl( S_\omega(\gamma_{n+1}) - S_\omega(\gamma_n) \Bigr) + O\left( \frac{1}{\gamma_{n}} \right).
\end{equation}
Given this formula, $\gamma_{n+1}^{(1)} - \gamma_{n}^{(1)} \to 1$ means that 
the contribution of $S_\omega(\gamma_{n+1}) - S_\omega(\gamma_n)$ is smaller than $1$ for any fixed $\omega>0$. 
In other words, the distribution of spacings of the normalized zeros of $X_\omega(s)$
is dominated by the gamma functor of $\zeta(s)$ only. 

On the other hand, it is known that a subtle behavior of the zeros of 
$\zeta(s)$ such as the Montgomery--Odlyzko conjecture 
is caused by the function $S(t)$, 
which is obtained by $S(t)=\lim_{\omega \to 0^+}S_\omega(t)$ if $t$ is not the ordinate of a zero of $\zeta(s)$, 
and $S(t)=\tfrac{1}{2}\lim_{\delta\to0^+}(S(t+\delta)+S(t-\delta))$ 
if $t$ is not the ordinate of a zero of $\zeta(s)$. 

Therefore, from the discussion above, Theorem \ref{thm_01} shows that 
the second normalization \eqref{0731_1} detects an effect  
of the arithmetic part $S_\omega(T)$ of the counting function $N_\omega(T)$. 
An Euler product formula of $\tilde{m}_\sigma(u)$ is a supporting evidence of this observation.
\bigskip

A motivation of this work was L. Weng's question to the author. 
In 2013, he and D. Zagier proved that all high-rank zeta functions for elliptic curves $E$ 
defined over a finite field satisfy an analogue of the Riemann Hypothesis (\cite{WZ}). 
Then he considered a distribution of the zeros of high-rank zeta functions for $E$ 
when the rank is varied 
and observed that 
the dominant term is very simple but 
the second dominant term is related to the Sato-Tate measure. 
His question to the author was what an analogue of his observation to the number field case is (\cite{We1},  
where he considered another version of \eqref{0731_1} 
but it is simplified in \cite{We2} as compatible with \eqref{0731_1}).    
For the rational number field $\mathbb Q$, high-rank zeta functions $\hat{\zeta}_{{\mathbb Q},n}(s)$ 
are expressed as linear combinations of products of the Riemann zeta-function and rational functions. 
The rank one case is $\hat{\zeta}_{{\mathbb Q}, 1}(s)=\hat{\zeta}(s)$. 
The rank two case is 
\[
s(2s-1)(2s-2)\hat{\zeta}_{{\Bbb Q},2}(s) 
= \xi(2s) - \xi(2s-1) 
= B_{1/2}(2s-\tfrac{1}{2}). 
\]
Therefore, the second dominant term of the distribution of the zeros is described by $m_1(x)$. 
The rank three case is
\[
\aligned
3s(3s-1)& (3s-2)(3s-3)\hat{\zeta}_{{\Bbb Q},3}(s) = X(s)+X(1-s), \\
X(s) & =\Bigl(3(2\xi(2)-1)s-4\xi(2) + 3\Bigr)\xi(3s) -  \xi(3s-1).  
\endaligned
\]
This looks similar to $A_{1}(3s-1)= \xi(3s) + \xi(3s-2)$ in a sense. 
Therefore, it is expected that 
the second dominant term of the distribution of the zeros is described by $m_{3/2}(x)$ 
up to a small correction. 
\medskip

This paper is organized as follows. 
In Section 2, we prepare some lemmas necessary for the proof of Theorem \ref{thm_01}. 
In Section 3, we prove Theorem \ref{thm_01} under RH at first 
for the simplicity of argument. 
Then we prove Theorem \ref{thm_01} unconditionally and prove Theorem \ref{thm_02}. 
In Section 4, we several comments and remarks on subjects of the paper. 
Finally, we provide a review of construction, basic properties and history of the $M$-function as an appendix. 

\section{Preliminaries} 

Let $\omega>0$. We will assume RH  if $0 < \omega < 1/2$ throughout this section. 
Then the imaginary parts of $A_\omega(s)$ and $B_\omega(s)$ are enumerated as 
\[
\cdots <\gamma_{-1}(B_\omega)<\gamma_{-1}(A_\omega)<\gamma_{0}(B_\omega)=0
<\gamma_{1}(A_\omega)<\gamma_{1}(B_\omega)<\gamma_{2}(A_\omega)<\gamma_{2}(B_\omega)<\cdots.
\]
with $\gamma_{-n}(A_\omega)=-\gamma_{n}(A_\omega)$ 
and $\gamma_{-n}(B_\omega)=-\gamma_{n}(B_\omega)$ for $n \geq 1$. 
We denote by $\gamma_n$ the $n$th imaginary part $\gamma_n(A_\omega)$ or $\gamma_n(B_\omega)$ when $n \ge 1$. 

\begin{lemma} We have  
\begin{equation} \label{0529_1}
\gamma_{n} = \frac{2\pi n}{\log n}\left( 1 + O\left(\frac{\log\log n}{\log n}\right) \right),
\end{equation}
\begin{equation} \label{0808_1}
\log \frac{\gamma_n}{2\pi} 
 = \log n
\left(1 + O\left(\frac{\log\log n}{\log n} \right) \right).
\end{equation}
These formulas are unconditional. 
\end{lemma}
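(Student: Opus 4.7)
The plan is to deduce both asymptotics by inverting the Riemann--von Mangoldt-type counting formula
\[
N_\omega(T) = \frac{T}{2\pi}\log\frac{T}{2\pi e} + O(\log T),
\]
which holds unconditionally for $X_\omega(s)$ (either $A_\omega$ or $B_\omega$) by applying the argument principle to the box $\{0 \le \sigma \le 1,\ 0 \le t \le T\}$, exactly as in the classical argument for $\xi(s)$; the unconditional version is essentially Li's Theorem~1, while the sharp form cited as \eqref{0425_1} requires RH and is not needed here. Since the $n$th ordinate $\gamma_n$ satisfies $N_\omega(\gamma_n) = n + O(1)$ (absorbing any possible multiplicities into the $O(1)$ term), substituting $T=\gamma_n$ gives
\[
n = \frac{\gamma_n}{2\pi}\log\frac{\gamma_n}{2\pi e} + O(\log \gamma_n).
\]

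Next I would carry out a standard two-step bootstrap inversion of this relation. A crude comparison shows $\gamma_n \asymp n/\log n$, hence $\log\gamma_n = \log n + O(\log\log n)$. To sharpen it, set $T_0 := 2\pi n/\log n$ and compute
\[
\frac{T_0}{2\pi}\log\frac{T_0}{2\pi e} = \frac{n}{\log n}\bigl(\log n - \log\log n - 1\bigr) = n - \frac{n\log\log n}{\log n} + O\!\left(\frac{n}{\log n}\right).
\]
Writing $\gamma_n = T_0(1+\delta_n)$ and expanding $f(T) = (T/2\pi)\log(T/2\pi e)$ to first order around $T_0$, using that $f'(T_0) = (\log n - \log\log n)/(2\pi)$, the equation $f(\gamma_n) = n + O(\log n)$ forces
\[
\delta_n = O\!\left(\frac{\log\log n}{\log n}\right),
\]
which is precisely \eqref{0529_1}.

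The second formula \eqref{0808_1} then follows by taking logarithms in \eqref{0529_1}:
\[
\log\frac{\gamma_n}{2\pi} = \log n - \log\log n + O\!\left(\frac{\log\log n}{\log n}\right) = \log n \left(1 + O\!\left(\frac{\log\log n}{\log n}\right)\right),
\]
the last step because $\log\log n / \log n$ absorbs the $-\log\log n/\log n$ term.

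There is no real obstacle here; the content is classical and depends only on the unconditional Riemann--von Mangoldt estimate for $N_\omega(T)$. The one point that needs care is verifying that the asymptotic $N_\omega(T) = (T/2\pi)\log(T/2\pi e) + O(\log T)$ is available unconditionally in the references, so that the lemma itself can be claimed without RH; this is precisely what Li's work provides.
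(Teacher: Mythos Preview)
Your proposal is correct and follows essentially the same approach as the paper: both arguments invert the unconditional Riemann--von Mangoldt estimate $N_\omega(T)=\frac{T}{2\pi}\log\frac{T}{2\pi e}+O(\log T)$ (the paper phrases this as $S_\omega(T)=O(\log T)$, citing Titchmarsh~9.4 rather than Li) and then deduce \eqref{0808_1} from \eqref{0529_1} by taking logarithms. The only cosmetic difference is in the inversion step itself---the paper writes down $n$ and $\log n$ as functions of $\gamma_n$ and takes their quotient, whereas you bootstrap via a first-order expansion around $T_0=2\pi n/\log n$; both are standard and equally direct.
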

\noindent
{\bf Remark.} It is claimed that  
\[
\gamma_{n} = \frac{2\pi n}{\log n}\left( 1 + O\left(\frac{1}{\log n}\right) \right) 
\]
in \cite[p.171]{Jeff05} standing on \eqref{0425_1} and $S_\omega(t)=O(\log t)$. 
However, the author do not know how to exclude the factor $\log\log n$ from \eqref{0529_1}. 
\begin{proof}
Suppose that $\gamma_n=\gamma_n(A_\omega)$. 
We have $S_\omega(T)=O(\log T)$ unconditionally 
as well as \cite[Theorem 9.4]{Tit}, 
where the implied constant does not depend on $\omega$. 
Therefore, 
\[
n = N_\omega(\gamma_n) = \frac{\gamma_n}{2\pi}\log\frac{\gamma_n}{2\pi e}\left(1+O\left( \frac{1}{\gamma_n} \right) \right)
\]
by the simplicity of zeros. This implies 
\[
\log n =  
\log \frac{\gamma_n}{2\pi}\left(1 +\frac{\log \log\frac{\gamma_n}{2\pi e}}{\log \frac{\gamma_n}{2\pi}} 
+ O\left( \frac{1}{\gamma_n \log \gamma_n} \right) \right).
\]
Taking the quotient of these two equalities, 
\[
\aligned
\frac{2\pi n}{\log{n}}
&= \gamma_n \, \frac{ \log\frac{\gamma_n}{2\pi}\left(1+O\left( \frac{1}{\gamma_n} \right) \right) 
-\left(1+O\left( \frac{1}{\gamma_n} \right) \right) 
}
{\log \frac{\gamma_n}{2\pi}\left(1 +\frac{\log \log\frac{\gamma_n}{2\pi e}}{\log \frac{\gamma_n}{2\pi}} 
+ O\left( \frac{1}{\gamma_n \log \gamma_n} \right) \right)} \\
&= \gamma_n \, 
\left(\frac{1+O\left( \frac{1}{\gamma_n} \right) 
}
{1 +\frac{\log \log\frac{\gamma_n}{2\pi e}}{\log \frac{\gamma_n}{2\pi}} + O\left( \frac{1}{\gamma_n \log \gamma_n} \right)}
+O\left( \frac{1}{\log\gamma_n} \right) 
\right) \\
&=\gamma_n \, \left(
1+O\left( \frac{\log\log\gamma_n}{\log\gamma_n} \right) 
\right).
\endaligned
\]
Therefore,
\[
\gamma_n
= \frac{2\pi n}{\log{n}}
\left(
1+O\left( \frac{\log\log\gamma_n}{\log\gamma_n} \right) 
\right). 
\]
In particular, $n/(\log n) \ll \gamma_n$ by $\gamma_n \to \infty$. 
Hence we obtain  \eqref{0529_1} by 
$\log\log\gamma_n/\log\gamma_n \ll \log\log (n/(\log n))/\log (n/(\log n)) \ll \log\log n/\log n$. 
By \eqref{0529_1}, we have 
\[
\log \frac{\gamma_n}{2\pi} 
 = \log{n}\left(1-\frac{\log\log n}{\log n}\right)\left(1 +O\left(\frac{\log\log n}{\log n}\right)\right) 
= \log n
\left(1 + O\left(\frac{\log\log n}{\log n} \right) \right).
\]
This is nothing but \eqref{0808_1}. 
The case of $\gamma_n=\gamma_n(B_\omega)$ is proved in a similar way. 
\end{proof}

\begin{lemma} \label{0808_3}
The gaps $\gamma_{n+1} - \gamma_{n}$ tend to $0$ as $n \to \infty$. 
\end{lemma}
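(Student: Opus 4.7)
The plan is to combine the identity $N_\omega(\gamma_{n+1}) - N_\omega(\gamma_n) = 1$ (which holds because the zeros of $X_\omega$ are simple and enumerated in order) with the counting formula \eqref{0425_1}, and then show that the arithmetic fluctuation $S_\omega(\gamma_{n+1}) - S_\omega(\gamma_n)$ is negligible compared to the main term in $\Delta_n := \gamma_{n+1} - \gamma_n$.

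First I would Taylor-expand \eqref{0425_1} between $\gamma_n$ and $\gamma_{n+1}$ to obtain
\[
\frac{\Delta_n}{2\pi}\log\frac{\gamma_n}{2\pi} = 1 - \bigl( S_\omega(\gamma_{n+1}) - S_\omega(\gamma_n) \bigr) + O\!\left(\frac{\Delta_n^2}{\gamma_n}\right) + O\!\left(\frac{1}{\gamma_n}\right).
\]
The unconditional bound $S_\omega(T) = O(\log T)$ (noted already in the proof of Lemma~1) inserted into this identity yields the a priori estimate $\Delta_n = O(1)$.

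Second, I would exploit the fact that $S_\omega \in C^\infty(\mathbb R)$ with $S_\omega'(t) = \pi^{-1}\mathrm{Re}(\zeta'/\zeta)(1/2+\omega+it)$, and bound the increment by $\Delta_n \cdot \sup_{t \in [\gamma_n, \gamma_{n+1}]} |S_\omega'(t)|$. For $\omega > 1/2$ absolute convergence of the Dirichlet series for $\zeta'/\zeta$ gives $|S_\omega'(t)| = O(1)$; for $\omega = 1/2$ classical unconditional estimates on $\sigma = 1$ yield $|\zeta'/\zeta(1+it)| = o(\log t)$; and for $0 < \omega < 1/2$ Littlewood-type bounds under RH give $|\zeta'/\zeta(1/2+\omega+it)| = O((\log t)^{1-2\omega}(\log\log t)^{O(1)})$. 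In each regime $|S_\omega'(t)| = o(\log t)$, so $|S_\omega(\gamma_{n+1}) - S_\omega(\gamma_n)| = \Delta_n \cdot o(\log\gamma_n)$.

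Substituting this into the main identity gives $\Delta_n \log\gamma_n = 2\pi(1 + o(1)) + O(1/\gamma_n)$, whence $\Delta_n \sim 2\pi/\log\gamma_n \to 0$. The main obstacle is securing the pointwise bound on $\zeta'/\zeta$ in the strip $1/2 < \sigma < 1$ under RH with constants independent of $n$ (for $0 < \omega < 1/2$); this is standard (see e.g.\ Titchmarsh, Chapter XIV) but the dependence on $\omega$ should be tracked carefully to preserve uniformity as $\gamma_n \to \infty$.
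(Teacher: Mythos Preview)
Your argument is correct but takes a different route from the paper. The paper bounds $S_\omega(t)$ itself, showing $S_\omega(t)=o(\log t)$ via the estimate $|S_\omega(t)|\ll |\log\zeta(\tfrac12+\omega+it)|$ and classical bounds on $\log\zeta$ (bounded for $\omega>1/2$, $\ll\log\log t$ on $\sigma=1$, and $\ll (\log t)^{1-2\omega}/\log\log t$ for $0<\omega<1/2$ under RH). From $S_\omega(t)=o(\log t)$ and \eqref{0425_1} one gets $\Delta_n\log\gamma_n = O(1)+o(\log\gamma_n)$ directly, hence $\Delta_n\to 0$, with no need for an a~priori bound or the mean value theorem.

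You instead control the \emph{derivative} $S_\omega'(t)=\pi^{-1}\mathrm{Re}\,(\zeta'/\zeta)(\tfrac12+\omega+it)$ and use the mean value theorem to bound the increment; this requires the extra step of first securing $\Delta_n=O(1)$ (which works, using Lemma~1 to see that $\log\gamma_{n+1}\asymp\log\gamma_n$, though you pass over this point a bit quickly). What your approach buys is that the pointwise bounds on $\zeta'/\zeta$ are precisely those used in the paper's next lemma (Lemma~3), so your argument is more unified with the subsequent development and in fact yields the sharper conclusion $\Delta_n\sim 2\pi/\log\gamma_n$ rather than just $\Delta_n\to 0$. The paper's approach is a touch cleaner for this lemma in isolation, since bounding $|S_\omega|$ avoids the bootstrap through the a~priori estimate.
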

\begin{proof} 
We show that $S_\omega(t)=o(\log t)$ holds for any fixed $\omega>0$, 
because it implies Lemma \ref{0808_3} by \eqref{0425_1}. 
 We have 
\[
\log \zeta(\tfrac{1}{2}+\omega+it)
\ll
\begin{cases}
1 & \text{if $\omega > 1/2$}, \\[2pt]
\log\log t & \text{if $\omega = 1/2$}, \\[2pt]
\log\log\log t & \text{if $\omega = 1/2$ under RH}, \\[2pt]
\displaystyle{\frac{(\log t)^{1-2\omega}}{\log\log t} } & \text{if $0 < \omega < 1/2$ under RH}.
\end{cases}
\]
for large $t>0$, 
where the first line is a consequence of the absolute convergence of the Dirichlet series of $\log\zeta(s)$, 
the second line is shown in \cite[Theorem 6.7]{MV} 
and the other cases are shown in \cite[Theorem 14.5, \S14.33]{Tit}. 
These estimates imply $S_\omega(t)=o(\log t)$, since $S_\omega(t) \ll |\log \zeta(1/2+\omega+it)|$.  
\end{proof}

\begin{lemma} We have
\begin{equation} \label{0809_0}
\frac{S_{\omega}(\gamma_{n+1}) - S_{\omega}(\gamma_{n})}{\gamma_{n+1}-\gamma_{n}}=O(E_{1,\omega}(\gamma_n))
\end{equation}
with
\begin{equation} \label{0809_1}
E_{1,\omega}(t)
=
\begin{cases}
\displaystyle{1} & \text{if $\omega > 1/2$}, \\[10pt]
\displaystyle{ \frac{\log t}{\log\log t} } & \text{if $\omega = 1/2$}, \\[10pt]
\displaystyle{ \log\log t } & \text{if $\omega = 1/2$ under RH}, \\[10pt]
\displaystyle{ (\log t)^{1-2\omega} } & \text{if $0 < \omega < 1/2$ under RH}.
\end{cases}
\end{equation}
\end{lemma}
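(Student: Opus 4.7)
The plan is to apply the mean value theorem to $S_\omega(t)$ on the interval $[\gamma_n,\gamma_{n+1}]$, identify $S_\omega'$ with a multiple of the logarithmic derivative of $\zeta$, and then invoke classical bounds on $\zeta'/\zeta$ on vertical lines. In every regime listed in \eqref{0809_1} the line $\sigma=1/2+\omega$ is free of zeros of $\zeta(s)$ -- by absolute convergence of the Euler product for $\omega>1/2$, by Hadamard--de la Vall\'ee Poussin for $\omega=1/2$, and by RH for $0<\omega<1/2$ -- so $S_\omega$ is of class $C^\infty$ on $\mathbb R$ and the mean value theorem produces
\[
\frac{S_\omega(\gamma_{n+1})-S_\omega(\gamma_n)}{\gamma_{n+1}-\gamma_n} \,=\, S_\omega'(t^{\ast})
\]
for some $t^{\ast}\in(\gamma_n,\gamma_{n+1})$.

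From $S_\omega(t)=\pi^{-1}\,{\rm Im}\log\zeta(1/2+\omega+it)$ and the chain rule one finds
\[
S_\omega'(t) \,=\, \frac{1}{\pi}\,{\rm Re}\,\frac{\zeta'}{\zeta}\bigl(\tfrac{1}{2}+\omega+it\bigr),
\]
so the problem reduces to a bound of the shape $|\zeta'/\zeta(1/2+\omega+it)|\ll E_{1,\omega}(t)$ on the corresponding vertical line. Each of the four cases then follows from classical estimates: for $\omega>1/2$ the Dirichlet series $-\zeta'/\zeta(s)=\sum\Lambda(n)n^{-s}$ converges absolutely and is uniformly bounded; for $\omega=1/2$ the unconditional bound $\zeta'/\zeta(1+it)\ll \log t/\log\log t$ is a standard consequence of the zero-free region; under RH the bounds $\log\log t$ on $\sigma=1$ and $(\log t)^{1-2\omega}$ on $\sigma=1/2+\omega$ for $0<\omega<1/2$ come from the Littlewood and Titchmarsh \cite[Thm.\ 14.5, \S14.33]{Tit} estimates that were already cited in the preceding lemma for $\log\zeta$, transferred to $\zeta'/\zeta$ by Cauchy's integral formula applied on a small disk contained in the corresponding zero-free region.

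Finally, since the gaps satisfy $\gamma_{n+1}-\gamma_n\to0$ by Lemma \ref{0808_3}, we have $t^{\ast}=\gamma_n+o(1)$; each of the functions in \eqref{0809_1} is slowly varying enough that $E_{1,\omega}(t^{\ast})\asymp E_{1,\omega}(\gamma_n)$, and combining the two displays above yields \eqref{0809_0}.

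The step I expect to be most delicate is the transfer from $\log\zeta$ to $\zeta'/\zeta$ in the RH case $0<\omega<1/2$: to recover the stated exponent $(\log t)^{1-2\omega}$ without losing an additional factor of $\log t$, one has to apply Cauchy's integral formula on a disk whose radius is bounded below by a positive constant, which in turn requires the Titchmarsh-type bound for $\log\zeta$ to hold uniformly on a strip of positive width rather than on a single line. The other three cases are essentially immediate once the right classical bound is quoted.
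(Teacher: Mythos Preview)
Your argument is correct and matches the paper's proof essentially line for line: mean value theorem, the identity $\pi S_\omega'(t)=\operatorname{Re}(\zeta'/\zeta)(\tfrac12+\omega+it)$, the pointwise bound $|\zeta'/\zeta(\tfrac12+\omega+it)|\ll E_{1,\omega}(t)$, and finally $E_{1,\omega}(t^\ast)\asymp E_{1,\omega}(\gamma_n)$ via Lemma~\ref{0808_3}. The only difference is bibliographic: the paper does not transfer from $\log\zeta$ to $\zeta'/\zeta$ via Cauchy's formula but simply cites \cite[(5.14.7)]{Tit} for the unconditional bound on $\sigma=1$ and \cite[\S14.33]{Tit} for the RH cases, where the required estimates on $\zeta'/\zeta$ are stated directly --- so your ``most delicate step'' is in fact not needed.
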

\begin{proof}
We have 
$\pi S_{\omega}^\prime(t) = {\rm Re} (\zeta'/\zeta)(1/2+\omega+it)$ 
by the definition of $S_\omega(t)$, 
since $\zeta(s)$ has no zeros in ${\rm Re}(s)>1/2$ by RH. 
Therefore, 
\[
\pi \left|
\frac{S_{\omega}(\gamma_{n+1}) - S_{\omega}(\gamma_{n})}{\gamma_{n+1}-\gamma_{n}}
\right|
 \leq  \left| {\rm Re}\left\{ \frac{\zeta'}{\zeta}(\tfrac{1}{2}+\omega+i\gamma) \right\} \right|
 \leq \left| \frac{\zeta'}{\zeta}(\tfrac{1}{2}+\omega+i\gamma) \right| 
\]
for some $\gamma_n < \xi < \gamma_{n+1}$ by Lemma \ref{0808_3} and the mean value theorem. 
On the right-hand side, we have
\begin{equation} \label{0809_7}
\frac{\zeta'}{\zeta}(\tfrac{1}{2}+\omega+i\xi)
\ll 
E_{1,\omega}(\xi),
\end{equation}
where the first line of \eqref{0809_1} is a consequence of the absolute convergence of the Dirichlet series of $(\zeta'/\zeta)(s)$, 
the second line of \eqref{0809_1} is shown in \cite[(5.14.7)]{Tit} 
and the other cases of \eqref{0809_1} are shown in \cite[\S14.33]{Tit}. 
These estimates imply \eqref{0809_0}, since 
$ \log \xi < \log {\gamma_{n+1}} = \log {\gamma_{n}} +O(\gamma_n^{-1}) $ 
by Lemma \ref{0808_3}. 
\end{proof}

\begin{lemma} \label{0812_1} We have 
\begin{equation} \label{0809_2}
\frac{\gamma_{n+1} - \gamma_{n}}{2\pi}\log\frac{\gamma_n}{2\pi e} = 
1 + O(E_{2,\omega}(\gamma_n)),  
\end{equation}
where $E_{2,\omega}(t) = E_{1,\omega}(t)/\log t$ for the function $E_{1,\omega}(t)$ of \eqref{0809_1}. 
\end{lemma}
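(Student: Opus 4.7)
The plan is to combine the previous lemmas by converting $\gamma_{n+1}^{(1)}-\gamma_n^{(1)}$ into the raw gap $\gamma_{n+1}-\gamma_n$ via the mean value theorem. Setting $f(t)=(t/2\pi)\log(t/2\pi e)$, one has $f'(t)=(1/2\pi)\log(t/2\pi)$, and $\gamma_n^{(1)}=f(\gamma_n)$ by \eqref{0814_2}. Hence there exists $\xi_n\in(\gamma_n,\gamma_{n+1})$ with
\[
\gamma_{n+1}^{(1)}-\gamma_n^{(1)} = \frac{\gamma_{n+1}-\gamma_n}{2\pi}\log\frac{\xi_n}{2\pi}.
\]
Substituting \eqref{0814_3} on the left and using \eqref{0809_0} to express the $S_\omega$-increment as $O((\gamma_{n+1}-\gamma_n)E_{1,\omega}(\gamma_n))$, I would obtain
\[
\frac{\gamma_{n+1}-\gamma_n}{2\pi}\log\frac{\xi_n}{2\pi} = 1 + O\bigl((\gamma_{n+1}-\gamma_n)E_{1,\omega}(\gamma_n)\bigr) + O(\gamma_n^{-1}).
\]

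Next I would isolate $\gamma_{n+1}-\gamma_n$ on the left by absorbing the $O((\gamma_{n+1}-\gamma_n)E_{1,\omega}(\gamma_n))$ term into the coefficient of the gap. A case-by-case inspection of \eqref{0809_1} shows $E_{1,\omega}(t)=o(\log t)$ in each of the four ranges of $\omega$, so $\log(\xi_n/2\pi)+O(E_{1,\omega}(\gamma_n))$ is asymptotic to $\log\gamma_n$, and expanding the reciprocal yields
\[
\gamma_{n+1}-\gamma_n = \frac{2\pi}{\log(\xi_n/2\pi)}\bigl(1 + O(E_{2,\omega}(\gamma_n))\bigr).
\]

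Finally I would pass from $\log(\xi_n/2\pi)$ to $\log(\gamma_n/2\pi e)$. Since $\gamma_{n+1}-\gamma_n\to 0$ by Lemma \ref{0808_3} and $\xi_n\in(\gamma_n,\gamma_{n+1})$, we have $\log(\xi_n/2\pi)=\log(\gamma_n/2\pi)+O(\gamma_n^{-1})$, while $\log(\gamma_n/2\pi e)=\log(\gamma_n/2\pi)-1$ differs by a bounded additive constant. Multiplying through by $\log(\gamma_n/2\pi e)/(2\pi)$ therefore gives
\[
\frac{\gamma_{n+1}-\gamma_n}{2\pi}\log\frac{\gamma_n}{2\pi e} = 1 + O(E_{2,\omega}(\gamma_n)) + O(1/\log\gamma_n),
\]
and one checks in each of the four cases of \eqref{0809_1} that $1/\log\gamma_n \ll E_{2,\omega}(\gamma_n)$, absorbing the last error.

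The main obstacle is the bootstrap in the middle step: the estimate \eqref{0809_0} for the $S_\omega$-increment produces an error term proportional to the gap $\gamma_{n+1}-\gamma_n$ itself, so one needs $E_{1,\omega}(\gamma_n)/\log\gamma_n\to 0$ in order to close the loop. This is precisely what the case-by-case verification above delivers, and it is what makes the exponent $2\omega$ (in the most delicate range $0<\omega<1/2$ under RH) appear as the improvement of $E_{2,\omega}$ over $E_{1,\omega}$.
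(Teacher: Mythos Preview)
Your proof is correct and follows essentially the same route as the paper: both arguments start from the identity $1 = (\gamma_{n+1}-\gamma_n)\tfrac{1}{2\pi}\log\tfrac{\gamma_n}{2\pi} + \bigl(S_\omega(\gamma_{n+1})-S_\omega(\gamma_n)\bigr) + O(\gamma_n^{-1})$ (you reach it via \eqref{0814_3} and the mean value theorem, the paper via a direct increment formula for $N_\omega$), then absorb the $S_\omega$-increment into a multiplicative factor $1+O(E_{1,\omega}(\gamma_n)/\log\gamma_n)$ using \eqref{0809_0} and invert. Your explicit final step, checking that the residual $O(1/\log\gamma_n)$ from passing between $\log(\gamma_n/2\pi)$ and $\log(\gamma_n/2\pi e)$ is dominated by $E_{2,\omega}(\gamma_n)$ in each case, is a point the paper leaves implicit.
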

\begin{proof} 
We have 
\[
N_{\omega}(t+h) -N_{\omega}(t) 
= \frac{h}{2\pi}\log\frac{t}{2\pi}
+S_{\omega}(t+h) - S_{\omega}(t) 
+O\left(\frac{1}{t+1}\right)
\]
for $0 \leq h \leq 1$ and $t \geq 2$ 
as well as the proof of \cite[Theorem 4.1]{Jeff05}, where the implied constant does not depend on $h$. 
Applying this to $t=\gamma_n$ and $h=\gamma_{n+1}-\gamma_{n}$ together with Lemma \ref{0808_3}, we get
\[
1=  N_{\omega}(\gamma_{n+1}) - N_{\omega}(\gamma_{n}) 
= \frac{\gamma_{n+1}-\gamma_{n}}{2\pi}\log\frac{\gamma_{n}}{2\pi e}
+ S_{\omega}(\gamma_{n+1}) - S_{\omega}(\gamma_{n})
+O\left(\frac{1}{\gamma_{n}}\right) 
\]
for large $n$. This implies 
\[
(\gamma_{n+1}-\gamma_{n})\frac{1}{2\pi}\log\frac{\gamma_{n}}{2\pi e}
\left( 1 + O\left( 
\frac{1}{\log\gamma_{n}}\left| \frac{S_{\omega}(\gamma_{n+1}) - S_{\omega}(\gamma_{n})}{\gamma_{n+1}-\gamma_{n}} \right|
\right)
\right)
= 1+O\left(\frac{1}{\gamma_{n}}\right).
\]
Applying \eqref{0809_0} to the left-hand side, we obtain \eqref{0809_2}.
\end{proof}
\begin{lemma} 
Assume that $f(t)$ belongs to $C^1({\mathbb R})$ and $f'(t)$ is bounded on $\mathbb R$. 
Then, 
\begin{equation} \label{0810_1}
\frac{1}{\gamma_{N}}\sum_{n=1}^{N-1} f(\gamma_n)(\gamma_{n+1}-\gamma_{n})
 = 
\frac{1}{\gamma_{N}}\int_{0}^{\gamma_{N}}f(t)\,dt
+ O\left( \frac{1}{\log \gamma_N} \right)
\end{equation}
holds for large $N>0$. 
\end{lemma}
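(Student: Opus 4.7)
The plan is to treat the sum on the left as a Riemann sum for $\int_0^{\gamma_N} f(t)\,dt$ and bound the error by squares of the gaps, which are small by Lemma \ref{0812_1}.

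First I would split off the initial segment. Since $|f(t)| \leq |f(0)| + \|f'\|_\infty |t|$, the integral $\int_0^{\gamma_1} f(t)\,dt$ is $O(\gamma_1^2)$ (a constant independent of $N$), so after division by $\gamma_N$ it contributes $O(1/\gamma_N)$, which is absorbed into the error term. Thus it suffices to compare $\sum_{n=1}^{N-1} f(\gamma_n)(\gamma_{n+1}-\gamma_n)$ with $\int_{\gamma_1}^{\gamma_N} f(t)\,dt$.

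Writing the integral as $\sum_{n=1}^{N-1}\int_{\gamma_n}^{\gamma_{n+1}} f(t)\,dt$ and applying the mean value theorem to $f$ on each interval, I obtain
\[
\left|\sum_{n=1}^{N-1} f(\gamma_n)(\gamma_{n+1}-\gamma_n) - \int_{\gamma_1}^{\gamma_N} f(t)\,dt\right|
\leq \|f'\|_\infty \sum_{n=1}^{N-1} (\gamma_{n+1}-\gamma_n)^2.
\]
By Lemma \ref{0812_1}, there is a constant $C$ such that $\gamma_{n+1}-\gamma_n \leq C/\log\gamma_n$ for all large $n$, so each square factors as $(\gamma_{n+1}-\gamma_n)^2 \leq C(\gamma_{n+1}-\gamma_n)/\log\gamma_n$. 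The remaining sum $\sum_{n=1}^{N-1}(\gamma_{n+1}-\gamma_n)/\log\gamma_n$ is itself a Riemann sum for $\int_{\gamma_1}^{\gamma_N} dt/\log t$, which is $O(\gamma_N/\log\gamma_N)$. Combining these gives the required bound $O(\gamma_N/\log\gamma_N)$ on the error, and dividing by $\gamma_N$ yields the $O(1/\log\gamma_N)$ appearing in \eqref{0810_1}.

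This argument is essentially routine; the only mild subtlety is that Lemma \ref{0812_1} controls gaps only for large $n$, so one has to absorb the first finitely many terms (where the bound $\gamma_{n+1}-\gamma_n \ll 1/\log\gamma_n$ may fail) into a bounded constant, which is again $O(1/\gamma_N)$ after division. The main point to verify carefully is the estimate $\sum_{n=1}^{N-1}(\gamma_{n+1}-\gamma_n)/\log\gamma_n \ll \gamma_N/\log\gamma_N$; one can either compare to the integral directly, using monotonicity of $1/\log t$, or note that since the gaps tend to zero (Lemma \ref{0808_3}) and $1/\log t$ is decreasing, the Riemann sum agrees with the integral up to a lower-order error.
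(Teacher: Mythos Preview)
Your argument is correct and follows essentially the same route as the paper: compare the left Riemann sum to $\int_{\gamma_1}^{\gamma_N} f(t)\,dt$, bound the discrepancy by $\|f'\|_\infty \sum_n (\gamma_{n+1}-\gamma_n)^2$, and then use the gap estimate $\gamma_{n+1}-\gamma_n \ll 1/\log\gamma_n$ from Lemma~\ref{0812_1} to show this is $O(\gamma_N/\log\gamma_N)$. The only cosmetic difference is that you factor one copy of the gap and treat $\sum_n (\gamma_{n+1}-\gamma_n)/\log\gamma_n$ as a Riemann sum for $\int dt/\log t$, whereas the paper bounds $(\gamma_{n+1}-\gamma_n)^2 \ll (\log\gamma_n)^{-2}$ and evaluates $\sum_n (\log\gamma_n)^{-2}$ via the Stieltjes integral against $dN_\omega$; both lead to the same $\gamma_N/\log\gamma_N$ bound.
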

\begin{proof}
We have
\[
\frac{1}{\gamma_{N}}\sum_{n=1}^{N-1} f(\gamma_n)(\gamma_{n+1}-\gamma_{n})
= 
\frac{1}{\gamma_{N}}\int_{\gamma_{1}}^{\gamma_{N}}f(t)\,dt
+\frac{1}{\gamma_{N}}\sum_{n=1}^{N-1} \int_{\gamma_{n}}^{\gamma_{n+1}}(f(\gamma_n)-f(t))\,dt
+ O\left(\frac{1}{\gamma_N} \right).
\]
The second sum on the right-hand side is estimated as 
\[
\aligned
\left| \sum_{n=1}^{N-1} \int_{\gamma_{n}}^{\gamma_{n+1}}(f(\gamma_n)-f(t))\,dt \right|
& \leq \sum_{n=1}^{N-1} \max_{\gamma_{n}\leq \xi \leq \gamma_{n+1}}|f'(\xi)|\int_{\gamma_{n}}^{\gamma_{n+1}}(t-\gamma_n)\,dt \\
& \leq \frac{1}{2}\max_{\gamma_{1}\leq t <\infty}|f'(t)| \sum_{n=1}^{N-1} (\gamma_{n+1}-\gamma_n)^2 .
\endaligned
\]
Here the sum on the right-hand side is estimated as 
\[
\sum_{n=1}^{N-1} (\gamma_{n+1}-\gamma_n)^2 
\ll \sum_{n=1}^{N-1} \frac{1}{\log \gamma_n}, 
\]
since $\gamma_{n+1}-\gamma_n \ll (\log \gamma_n)^{-1}$ by \eqref{0809_2}. 
Using the Stietjes integral and integration by parts, we have 
\[
\sum_{n=1}^{N-1} \frac{1}{\log \gamma_n}
 \ll \int_{\gamma_1}^{\gamma_N}\frac{dN_\omega(t)}{(\log t)^2}
 \ll \int_{\gamma_1}^{\gamma_N}\frac{dt}{\log t} 
 \ll \frac{\gamma_N}{\log \gamma_N}.
\]
Hence we obtain \eqref{0810_1}. 
\end{proof}

\section{Proofs of results} \label{section_proofs}

At first, we prove Theorem \ref{thm_01} 
assuming RH if $0<\omega<1/2$ 
after preparing two propositions 
standing on results in the previous section. 

\begin{proposition} \label{prop_1}
Assume that $f(t)$ belongs to $C^1({\mathbb R})$ and is bounded on $\mathbb R$. 
Then, 
\begin{equation} \label{0810_3}
\frac{1}{N_\omega(T)} 
\sum_{0<\gamma_n \leq T} f(\gamma_n) 
= \frac{1}{T}\int_{0}^{T} f(t)\,dt 
+ O\left( E_{2,\omega}(T)\right)
\end{equation}
holds for large $T>0$, 
where $E_{2,\omega}(t) = E_{1,\omega}(t)/\log t$ for the function $E_{1,\omega}(t)$ of \eqref{0809_1}. 
\end{proposition}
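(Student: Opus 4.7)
The idea is to write the sum as a Riemann--Stieltjes integral against $dN_\omega(t)$, split off the smooth part predicted by \eqref{0425_1}, and control the remainders with the $\zeta'/\zeta$ estimate \eqref{0809_7}. Setting $R_0(t):=(t/2\pi)\log(t/2\pi e)$ and $L:=\log(T/2\pi e)$, Riemann--Stieltjes integration by parts gives
\[
\sum_{0<\gamma_n\leq T} f(\gamma_n) \;=\; f(T)N_\omega(T) \;-\; \int_0^T N_\omega(t) f'(t)\,dt.
\]
Substituting $N_\omega(t)=R_0(t)+S_\omega(t)+O(1)$ from \eqref{0425_1}, integrating the $R_0$ contribution by parts back (using $R_0(0)=0$), converts this into
\[
\sum_{0<\gamma_n\leq T} f(\gamma_n) \;=\; \int_0^T R_0'(t)f(t)\,dt \;+\; f(T)S_\omega(T) \;-\; \int_0^T S_\omega(t) f'(t)\,dt \;+\; O\bigl(\|f\|_\infty + \|f'\|_\infty T\bigr).
\]

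For the arithmetic terms I would use that $\omega>0$ and the standing hypothesis of Section~2 (RH if $0<\omega<1/2$) ensure $\zeta(1/2+\omega+it)\neq 0$, so $S_\omega$ is $C^\infty$ and $|S_\omega(t)| \ll |\log\zeta(\tfrac12+\omega+it)| \ll E_{1,\omega}(t)$ by the same pointwise bounds that drive the proof of Lemma~\ref{0808_3}. This yields $f(T)S_\omega(T)=O(\|f\|_\infty E_{1,\omega}(T))$ and $\int_0^T S_\omega f'\,dt = O(\|f'\|_\infty T E_{1,\omega}(T))$. Dividing by $N_\omega(T)\asymp TL$ converts these errors (as well as the $\|f'\|_\infty T$ term coming from the $O(1)$ in \eqref{0425_1}) into the allowed $O(E_{2,\omega}(T))$, since $E_{2,\omega}(T)=E_{1,\omega}(T)/\log T$.

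It remains to show that the smooth part matches the target. Using $N_\omega(T)=(TL/2\pi)(1+O(L^{-1}))$ and the identity $\log(t/2\pi)-L=\log(et/T)$,
\[
\frac{1}{N_\omega(T)}\int_0^T R_0'(t)f(t)\,dt \;-\; \frac{1}{T}\int_0^T f(t)\,dt \;=\; \frac{1}{TL}\int_0^T f(t)\log(et/T)\,dt \;+\; O(\|f\|_\infty L^{-1}).
\]
The change of variable $u=t/T$ bounds the first term on the right by $\|f\|_\infty L^{-1}\int_0^1 |1+\log u|\,du \ll 1/\log T$, and a case check against \eqref{0809_1} shows $1/\log T \leq E_{2,\omega}(T)$ in each of the four regimes. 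Combining with the previous paragraph then yields \eqref{0810_3}.

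The main point to watch is that this argument tacitly uses $\|f'\|_\infty<\infty$: the Stieltjes integration by parts needs $f'$ integrable (or bounded) in order to estimate the arithmetic contributions, otherwise $\int f\,dE$ (where $E:=N_\omega-R_0-S_\omega$) could be as large as $\|f\|_\infty N_\omega(T)$ because of the unit jumps of $E$. The stated hypothesis ``$f\in C^1(\mathbb{R})$ and $f$ bounded'' alone is not quite enough, but the companion lemma yielding \eqref{0810_1} has the explicit additional assumption $\|f'\|_\infty<\infty$, and the test functions $\phi$ ultimately fed into Proposition~\ref{prop_1} via Theorem~\ref{thm_01} satisfy $\phi'(x)\ll 1$ on $[-1,1]$ and $\phi'(x)\ll x^{-2}$ for $|x|\geq 1$; I would simply read boundedness of $f'$ as an implicit hypothesis. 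With that caveat the rest of the computation is routine.
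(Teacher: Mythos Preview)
Your argument is correct (with the caveat you already identify, that $f'$ must be bounded; the paper's own proof has the same tacit requirement since it invokes \eqref{0810_1}). However, your route is genuinely different from the paper's. The paper does \emph{not} integrate by parts against $dN_\omega$; instead it passes through the Riemann sum $\sum f(\gamma_n)(\gamma_{n+1}-\gamma_n)$, first comparing it to $\int f$ via Lemma~5 (formula \eqref{0810_1}), and then comparing it to $N_\omega(T)^{-1}\sum f(\gamma_n)$ by inserting the gap estimate $\tfrac{\gamma_{n+1}-\gamma_n}{2\pi}\log\tfrac{\gamma_n}{2\pi e}=1+O(E_{2,\omega}(\gamma_n))$ from Lemma~4 and handling the mismatch between $\log(\gamma_n/2\pi)$ and $\log(\gamma_N/2\pi)$ by partial summation. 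Thus the paper's proof rests on the $\zeta'/\zeta$ bound \eqref{0809_7} (through Lemma~4), whereas your proof rests on the pointwise bound $|S_\omega(t)|\ll|\log\zeta(\tfrac12+\omega+it)|\ll E_{1,\omega}(t)$ coming from the estimates in the proof of Lemma~\ref{0808_3}. Your approach is shorter and avoids the two auxiliary lemmas entirely; the paper's approach has the mild advantage that it never needs $S_\omega$ itself, only differences $S_\omega(\gamma_{n+1})-S_\omega(\gamma_n)$, which is closer in spirit to how Proposition~\ref{prop_2} is proved next. A small cosmetic point: your ``Plan'' cites \eqref{0809_7}, but what you actually use is the $\log\zeta$ bound from Lemma~\ref{0808_3}'s proof; the conclusion $|S_\omega|\ll E_{1,\omega}$ is nonetheless valid in every regime of \eqref{0809_1}.
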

\begin{proof} 
It is sufficient to show that the lef-hand side of \eqref{0810_1} 
is equal to the left-hand side of \eqref{0810_3} up to a reasonable error terms. 
We have 
\[
\aligned
\frac{1}{\gamma_{N}} 
 \sum_{n=1}^{N-1}  f(\gamma_n)(\gamma_{n+1}-\gamma_{n}) 
 & = 
\frac{1}{\frac{\gamma_{N}}{2\pi}\log\frac{\gamma_{N}}{2\pi}} 
\sum_{n=1}^{N-1}f(\gamma_n)  \frac{\gamma_{n+1}-\gamma_{n}}{2\pi}\log\frac{\gamma_{N}}{2\pi} \\
& = 
 \frac{1}{\frac{\gamma_{N}}{2\pi}\log\frac{\gamma_{N}}{2\pi}} 
 \sum_{n=1}^{N-1}f(\gamma_n)
 \frac{\gamma_{n+1}-\gamma_{n}}{2\pi}\log\frac{\gamma_{n}}{2\pi} \\ 
& \quad  
+ 
 \frac{1}{\frac{\gamma_{N}}{2\pi}\log\frac{\gamma_{N}}{2\pi}} 
 \sum_{n=1}^{N-1}f(\gamma_n)
 \frac{\gamma_{n+1}-\gamma_{n}}{2\pi} \log\frac{\gamma_n}{2\pi} \left( \frac{\log\frac{\gamma_N}{2\pi}}{\log\frac{\gamma_n}{2\pi}} -1 \right) 
\\ 
& = S_1+S_2, 
\endaligned
\]
say. First we consider $S_1$. We have 
\[
\left|
S_1 - \frac{1}{\frac{\gamma_{N}}{2\pi}\log\frac{\gamma_{N}}{2\pi}} 
 \sum_{n=1}^{N-1}f(\gamma_n)
\right|
\ll
 \frac{1}{\frac{\gamma_{N}}{2\pi}\log\frac{\gamma_{N}}{2\pi}} 
 \sum_{n=1}^{N-1}
 \left| \frac{\gamma_{n+1}-\gamma_{n}}{2\pi} \log\frac{\gamma_n}{2\pi} - 1 \right|. 
\]
For the sum on the right-hand side,  
\[
\sum_{n=1}^{N-1}
 \left| \frac{\gamma_{n+1}-\gamma_{n}}{2\pi} \log\frac{\gamma_n}{2\pi} - 1 \right|
 \ll 
\sum_{n=1}^{N-1}E_{2,\omega}(\gamma_n) 
 \ll 
\int_{\gamma_1}^{\gamma_N} E_{2,\omega}(t)dN_\omega(t)
\]
by \eqref{0809_2} and the Stietjes integral.  Here 
\[
\int_{\gamma_1}^{\gamma_N} E_{2,\omega}(t)dN_\omega(t) 
\ll \int_{\gamma_1}^{\gamma_N} E_{2,\omega}(t)(\log t)dt 
\ll \gamma_N \log \gamma_N E_{2,\omega}(\gamma_N)
\]
by integration by parts. Hence 
\[
\left|
S_1 - \frac{1}{\frac{\gamma_{N}}{2\pi}\log\frac{\gamma_{N}}{2\pi}} 
 \sum_{n=1}^{N-1}f(\gamma_n)
\right|
\ll E_{2,\omega}(\gamma_N).
\]
Next we consider $S_2$. We have 
\[
|S_2|  \ll
 \frac{1}{\frac{\gamma_{N}}{2\pi}\log\frac{\gamma_{N}}{2\pi}} 
 \sum_{n=1}^{N-1} \left( \frac{\log\frac{\gamma_N}{2\pi}}{\log\frac{\gamma_n}{2\pi}} -1 \right) 
\]
by \eqref{0809_2}. Using the partial summation for the sum on the right-hand side, 
\[
\aligned
\frac{1}{\frac{\gamma_{N}}{2\pi}\log\frac{\gamma_{N}}{2\pi}} \sum_{n=1}^{N-1}
\left( \frac{\log\frac{\gamma_N}{2\pi}}{\log\frac{\gamma_n}{2\pi}} -1 \right)
& = \frac{2\pi}{\gamma_N} \int_{\gamma_1}^{\gamma_N}
\Bigl(\sum_{0<\gamma_n \leq x}1 \Bigr) \frac{1}{x(\log\frac{x}{2\pi})^2}\, dx+ O\left(\frac{1}{\gamma_N}\right)\\
& \ll  \frac{1}{\gamma_N}  \int_{\gamma_1}^{\gamma_N} x\log x \cdot \frac{1}{x(\log x)^2}\, dx+ O\left(\frac{1}{\gamma_N}\right)
\ll \frac{1}{\log \gamma_N}.
\endaligned
\]
From the above argument, we obtain
\[
\frac{1}{\gamma_{N}} 
 \sum_{n=1}^{N-1}  f(\gamma_n)(\gamma_{n+1}-\gamma_{n}) 
= 
 \frac{1}{\frac{\gamma_{N}}{2\pi}\log\frac{\gamma_{N}}{2\pi}} 
 \sum_{n=1}^{N-1}f(\gamma_n)
 +O(E_{2\omega}(\gamma_N)) , 
\]
since $(\log t)^{-1} \ll E_{2,\omega}(t)$ for every $\omega>0$. 
Combining this with \eqref{0810_1} and 
\[
\frac{\gamma_{N}}{2\pi}\log\frac{\gamma_{N}}{2\pi}
=N_\omega(\gamma_N)\left(1+O\left(\frac{1}{\gamma_N}\right) \right), 
\]
we obtain \eqref{0810_3} and complete the proof. 
\end{proof}
\begin{proposition} \label{prop_2}
Let $\phi(x)$ be a function of $C^1({\mathbb R})$. 
Assume that $\phi'(x) \ll 1$ for $|x| \leq 1$, 
$\phi'(x) \ll x^{-2}$ for $|x| \geq 1$ 
and $u \mapsto \frac{d}{du}\phi({\rm Re}\frac{\zeta'}{\zeta}(\tfrac{1}{2}+\omega+iu))$ 
is bounded on $\mathbb R$. 
We define 
\begin{equation} \label{0807_1}
\ddot{\gamma}_{n}=\varrho_\omega^{1/2}\gamma_n^{(2)} = \Bigl( \frac{\gamma_n}{2\pi}\log\frac{\gamma_n}{2\pi e} - n \Bigr)
\frac{1}{2\pi}\log\frac{\gamma_n}{2\pi e}.
\end{equation}
Then 
\begin{equation} \label{0812_2}
\aligned
\frac{1}{N_\omega(T)} \sum_{0<\gamma_n \leq T} \phi\left(\ddot{\gamma}_{n+1}-\ddot{\gamma}_n\right) 
& = \frac{1}{N_\omega(T)}\sum_{0<\gamma_n \leq T}  
\phi\left(-\frac{1}{\pi}{\rm Re}\frac{\zeta'}{\zeta}(\tfrac{1}{2}+\omega+i\gamma_n)\right) \\
& \quad 
+O\left( \frac{\log\log T}{\log T} \right)+O(E_{2,\omega}(T))
\endaligned
\end{equation}
holds for large $T>0$. 
\end{proposition}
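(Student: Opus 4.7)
The plan is to expand $\ddot{\gamma}_{n+1}-\ddot{\gamma}_n$ using the counting-function formula \eqref{0425_1}, to identify $-\pi^{-1}\mathrm{Re}(\zeta'/\zeta)(\tfrac{1}{2}+\omega+i\gamma_n)$ as its leading contribution via the mean value theorem applied to $S_\omega$, and to transfer the comparison through $\phi$ using the two-scale decay of $\phi'$. Substituting $n=N_\omega(\gamma_n)$ into \eqref{0425_1} gives $\tfrac{\gamma_n}{2\pi}\log\tfrac{\gamma_n}{2\pi e}-n=-S_\omega(\gamma_n)-\tfrac{7+2\omega}{8}+O(\gamma_n^{-1})$, so from \eqref{0807_1} one obtains
\[
\ddot{\gamma}_{n+1}-\ddot{\gamma}_n = -\bigl(S_\omega(\gamma_{n+1})-S_\omega(\gamma_n)\bigr)\frac{1}{2\pi}\log\frac{\gamma_n}{2\pi e} + r_n,
\]
where $r_n$ gathers the terms multiplied by the small quantity $\log(\gamma_{n+1}/\gamma_n)\ll 1/(\gamma_n\log\gamma_n)$ (Lemma~\ref{0812_1}). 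Using $S_\omega(t)=o(\log t)$ from the proof of Lemma~\ref{0808_3}, one checks $r_n=O(\log\gamma_n/\gamma_n)$.

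I then apply the mean value theorem to $S_\omega$ on $[\gamma_n,\gamma_{n+1}]$ together with the identity $\pi S_\omega'(u)=\mathrm{Re}(\zeta'/\zeta)(\tfrac{1}{2}+\omega+iu)$ to find $\xi_n\in(\gamma_n,\gamma_{n+1})$ with $S_\omega(\gamma_{n+1})-S_\omega(\gamma_n)=\pi^{-1}(\gamma_{n+1}-\gamma_n)\,\mathrm{Re}(\zeta'/\zeta)(\tfrac{1}{2}+\omega+i\xi_n)$. Combined with Lemma~\ref{0812_1}, this yields
\[
\ddot{\gamma}_{n+1}-\ddot{\gamma}_n = B'_n\bigl(1+O(E_{2,\omega}(\gamma_n))\bigr)+O(\log\gamma_n/\gamma_n), \qquad B'_n := -\tfrac{1}{\pi}\mathrm{Re}\tfrac{\zeta'}{\zeta}(\tfrac{1}{2}+\omega+i\xi_n).
\]

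The main obstacle is comparing $\phi(\ddot{\gamma}_{n+1}-\ddot{\gamma}_n)$ with $\phi(B'_n)$ \emph{uniformly} in $n$: by \eqref{0809_7} one has $|B'_n|\ll E_{1,\omega}(\gamma_n)$, which can be large, so the naive pointwise error $|B'_n|\cdot O(E_{2,\omega})$ need not be small on its own. The two regimes of the $\phi'$-hypothesis are precisely tailored to counter this. If $|B'_n|\leq 1$, the mean-value point between $\ddot{\gamma}_{n+1}-\ddot{\gamma}_n$ and $B'_n$ stays bounded, and the Lipschitz bound $\phi'\ll 1$ contributes an error $\ll E_{2,\omega}(\gamma_n)+\log\gamma_n/\gamma_n$. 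If $|B'_n|>1$, then for $n$ large enough (so $E_{2,\omega}(\gamma_n)<1/2$) the mean-value point has modulus comparable to $|B'_n|$, so $|\phi'|$ there is $\ll |B'_n|^{-2}$; this factor exactly cancels the $|B'_n|$ in the multiplicative error and again yields $\ll E_{2,\omega}(\gamma_n)+\log\gamma_n/\gamma_n$. This $x^{-2}$-decay cancellation is the essential use of the hypothesis.

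Finally, to replace $\xi_n$ by $\gamma_n$ inside $\zeta'/\zeta$, I invoke the bounded-derivative hypothesis on $u\mapsto\phi(\mathrm{Re}(\zeta'/\zeta)(\tfrac{1}{2}+\omega+iu))$, applied (after the trivial rescaling $x\mapsto -x/\pi$ that preserves the structure of the $\phi'$-bounds) to the function $u\mapsto\phi(-\pi^{-1}\mathrm{Re}(\zeta'/\zeta)(\tfrac{1}{2}+\omega+iu))$. Since $|\xi_n-\gamma_n|\leq\gamma_{n+1}-\gamma_n\ll 1/\log\gamma_n$, this contributes a further pointwise error $\ll 1/\log\gamma_n$. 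Summing the combined bound $\ll E_{2,\omega}(\gamma_n)+1/\log\gamma_n+\log\gamma_n/\gamma_n$ over $0<\gamma_n\leq T$, dividing by $N_\omega(T)$, and applying the Stieltjes estimate $dN_\omega(t)\ll\log t\,dt$ gives the average error $O(E_{2,\omega}(T))+O(1/\log T)$, which is absorbed into the stated $O(\log\log T/\log T)+O(E_{2,\omega}(T))$.
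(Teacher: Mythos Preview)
Your proof is correct and follows essentially the same strategy as the paper: expand $\ddot{\gamma}_{n+1}-\ddot{\gamma}_n$ into a main term $-(S_\omega(\gamma_{n+1})-S_\omega(\gamma_n))\cdot\tfrac{1}{2\pi}\log\tfrac{\gamma_n}{2\pi e}$ plus a small remainder, rewrite the main term via the mean value theorem and Lemma~\ref{0812_1} as $B'_n(1+O(E_{2,\omega}(\gamma_n)))$, transfer through $\phi$ by splitting according to whether $|B'_n|$ is large or small (exploiting the $x^{-2}$ decay of $\phi'$ in the large case), shift $\xi_n$ to $\gamma_n$ using the bounded-derivative hypothesis, and average via Stieltjes.

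The only noteworthy difference is in how you bound the remainder term. You substitute \eqref{0425_1} directly into \eqref{0807_1} and use $S_\omega(t)=O(\log t)$, obtaining $r_n=O(\log\gamma_n/\gamma_n)$. The paper instead writes $\ddot{\gamma}_{n+1}-\ddot{\gamma}_n$ through the first-normalized ordinates $\gamma_n^{(1)}$ and then bounds $\gamma_{n+1}^{(1)}-(n+1)$ using the cruder Lemma~1 asymptotics \eqref{0529_1}--\eqref{0808_1}, arriving at the weaker remainder $O(\log\log\gamma_n/\log\gamma_n)$, which is the source of the $O(\log\log T/\log T)$ term in \eqref{0812_2}. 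Your route is slightly cleaner and actually yields the sharper $O(1/\log T)$ in place of $O(\log\log T/\log T)$; this is harmlessly absorbed into the stated error, as you note.
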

\begin{proof}  
On the right-hand side of \eqref{0814_3}, we have 
\[
\aligned
S_\omega(\gamma_{n+1})-S_\omega(\gamma_n)
& =
\frac{1}{\pi}
{\rm Re}\frac{\zeta'}{\zeta}(\tfrac{1}{2}+\omega+i\xi_n)(\gamma_{n+1}-\gamma_n) 
\endaligned
\]
for some $\xi_n \in (\gamma_{n},\gamma_{n+1})$ by the mean value theorem. 
Therefore,  
\begin{equation} \label{0809_5}
\aligned
\Bigl(\gamma_{n+1}^{(1)} -  \gamma_{n}^{(1)}  - 1  \Bigr) & \frac{1}{2\pi}\log\frac{\gamma_n}{2\pi e} \\
= & -\frac{1}{\pi}
{\rm Re}\frac{\zeta'}{\zeta}(\tfrac{1}{2}+\omega+i\xi_n)\frac{\gamma_{n+1}-\gamma_n}{2\pi}\log\frac{\gamma_n}{2\pi e}
+ O\left( \frac{\log\gamma_n}{\gamma_n} \right) 
\endaligned
\end{equation}
by \eqref{0814_3}. 
On the other hand, we have 
\[
\ddot{\gamma}_{n+1}- \ddot{\gamma}_n 
 = \Bigl(\gamma_{n+1}^{(1)}-\gamma_{n}^{(1)}-1\Bigr)\frac{1}{2\pi}\log\frac{\gamma_n}{2\pi e}
 + 
\Bigl(\gamma_{n+1}^{(1)}-(n+1)\Bigr)\frac{1}{2\pi}\left( \log\frac{\gamma_{n+1}}{2\pi e}-\log\frac{\gamma_n}{2\pi e} \right)
\]
by definitions \eqref{0814_2} and \eqref{0807_1}. 
The second term of the right-hand side is estimated as 
\[
\aligned
\Bigl(&\gamma_{n+1}^{(1)}  -(n+1)\Bigr)
\frac{1}{2\pi}\left( \log\frac{\gamma_{n+1}}{2\pi e}-\log\frac{\gamma_n}{2\pi e}
\right) 
= \Bigl(\gamma_{n+1}^{(1)} -(n+1)\Bigr)
\frac{1}{2\pi}
 \log\left( 1 + \frac{\gamma_{n+1}-\gamma_n}{\gamma_n} \right) \\
& \quad \ll 
n \frac{\log\log n}{\log n} \cdot \frac{\gamma_{n+1}-\gamma_n}{\gamma_n} 
\ll \gamma_n \log\log \gamma_n \cdot \frac{1}{\gamma_n\log \gamma_n}
= \frac{\log\log \gamma_n}{\log \gamma_n}
\endaligned
\]
by \eqref{0529_1}, \eqref{0808_1} and \eqref{0809_2}. 

By the above argument, we get 
\begin{equation} \label{0809_6}
\ddot{\gamma}_{n+1}- \ddot{\gamma}_n  \\
=\Bigl(\gamma_{n+1}^{(1)} - \gamma_{n}^{(1)} -1 \Bigr) \frac{1}{2\pi}\log\frac{\gamma_{n}}{2\pi e}
+ O\Bigl( \frac{\log\log \gamma_n}{\log \gamma_n} \Bigr).
\end{equation}
Combining \eqref{0809_5} and \eqref{0809_6}, we obtain 
\begin{equation} \label{0821_4}
\ddot{\gamma}_{n+1}- \ddot{\gamma}_n  \\
=-\frac{1}{\pi}
{\rm Re}\frac{\zeta'}{\zeta}(\tfrac{1}{2}+\omega+i\xi_n)\frac{\gamma_{n+1}-\gamma_n}{2\pi}\log\frac{\gamma_n}{2\pi e}
+ O\Bigl( \frac{\log\log \gamma_n}{\log \gamma_n} \Bigr)
\end{equation}
for some $\xi _n\in (\gamma_n,\gamma_{n+1})$. 
Therefore, 
\[
\aligned
\phi\left(\ddot{\gamma}_{n+1}-\ddot{\gamma}_n\right)
& =\phi\left(-\frac{1}{\pi}{\rm Re}\frac{\zeta'}{\zeta}(\tfrac{1}{2}+\omega+i\xi_n)
\frac{\gamma_{n+1}-\gamma_{n}}{2\pi}\log\frac{\gamma_n}{2\pi e}+O\left(\frac{\log\log\gamma_n}{\log \gamma_n}\right) \right) \\
& =\phi\left(-\frac{1}{\pi}{\rm Re}\frac{\zeta'}{\zeta}(\tfrac{1}{2}+\omega+i\xi_n)
\frac{\gamma_{n+1}-\gamma_{n}}{2\pi}\log\frac{\gamma_n}{2\pi e} \right) +O\left(\frac{\log\log\gamma_n}{\log \gamma_n}\right) \\
& =\phi\left(-\frac{1}{\pi}{\rm Re}\frac{\zeta'}{\zeta}(\tfrac{1}{2}+\omega+i\xi_n)
\Bigl(1+E_{2,\omega}(\gamma_n)\Bigr) \right) +O\left(\frac{\log\log\gamma_n}{\log \gamma_n}\right)
\endaligned
\]
by the mean value theorem and \eqref{0809_2}, since $\phi'(x)$ is bounded. 

Now we take $T_0>0$ so that the size of the error term $O(E_{2,\omega}(t))$ of Lemma \ref{0812_1} is less than $1/2$ for every $t \geq T_0$.  
We put $r(t)=-{\rm Re}\frac{\zeta'}{\zeta}(\tfrac{1}{2}+\omega+it)$, 
$I_1(T)=\{t \in [T_0,T] \,:\, |r(t)| \leq 2/3 \}$ and $I_2(T)=\{t \in [T_0,T] \,:\, |r(t)| > 2/3 \}$ 
so that $[T_0,T]=I_1(T) \cup I_2(T)$. 

If $\gamma_n \geq T_0$ and $\xi_n \in I_1(T)$, we have 
\[
\aligned
\phi\Bigl(r(\xi_n)(1+O(E_{2,\omega}(\gamma_n)))\Bigr)-\phi(r(\xi_n)) 
& = \pm \int_{r(\xi_n)}^{r(\xi_n)(1+O(E_{2,\omega}(\gamma_n)))} \phi'(u)du  \\
& \ll |r(\xi_n)| E_{2,\omega}(\gamma_n) \leq E_{2,\omega}(\gamma_n),
\endaligned
\]
since $|r(\xi_n)| \leq 1$ and $|r(\xi_n)(1+O(E_{2,\omega}(t)))|\leq 1$. 

If $\gamma_n \geq T_0$ and $\xi_n \in I_2(T)$, we have 
\[
\aligned
\phi\Bigl(r(\xi_n)(1+O(E_{2,\omega}(\gamma_n)))\Bigr)-\phi(r(\xi_n)) 
&= \pm \int_{r(\xi_n)}^{r(\xi_n)(1+O(E_{2,\omega}(\gamma_n)))} \phi'(u)du  \\ 
& \ll \left| \frac{E_{2,\omega}(\gamma_n)}{r(\xi_n)(1+O(E_{2,\omega}(\gamma_n)))} \right| \ll E_{2,\omega}(\gamma_n),
\endaligned
\]
since $|r(\xi_n)(1+O(E_{2,\omega}(\gamma_n)))|\geq 1/3$. Therefore, 
\[
\phi\Bigl(r(\xi_n)(1+E_{2,\omega}(\gamma_n))\Bigr)=\phi(r(\xi_n))+O(E_{2,\omega}(\gamma_n)) 
\]
for every $\gamma_n \geq T_0$ and $\xi_n \in [T_0,T]$. Moreover, we have 
\[
\phi\Bigl(r(\xi_n)(1+E_{2,\omega}(\gamma_n))\Bigr)=\phi(r(\gamma_n))+ O\left( \frac{1}{\log\gamma_n} \right)+O(E_{2,\omega}(\gamma_n)) 
\]
by the mean value theorem, since $\frac{d}{du}\phi(r(u))$ is bounded on $\mathbb R$, 
$\gamma_{n}<\xi_n<\gamma_n$ and $\gamma_{n+1}-\gamma_{n}\ll (\log\gamma_n)^{-1}$.
Therefore, 
\[
\aligned
\frac{1}{N_\omega(T)} \sum_{0<\gamma_n \leq T} & \phi\left(\ddot{\gamma}_{n+1}-\ddot{\gamma}_n\right) 
 =\frac{1}{N_\omega(T)}\sum_{0<\gamma_n \leq T} 
\phi\left(-\frac{1}{\pi}{\rm Re}\frac{\zeta'}{\zeta}(\tfrac{1}{2}+\omega+i\gamma_n) \right) \\
& 
+O\left(\frac{1}{N_\omega(T)}\sum_{0<\gamma_n \leq T} \frac{\log\log \gamma_n}{\log \gamma_n} \right)
+O\left(\frac{1}{N_\omega(T)}\sum_{0<\gamma_n \leq T} E_{2,\omega}(\gamma_n) \right).
\endaligned
\] 
By the Stietjes integral and integration by parts, we have 
\[
\sum_{0<\gamma_n \leq T} \frac{\log\log \gamma_n}{\log \gamma_n}
 = \int_{\gamma_1}^{T} \frac{\log\log t}{\log t} dN_\omega(t) 
 \ll \int_{\gamma_1}^{T} \frac{\log\log t}{\log t} (\log t) \, dt 
\ll  T\log\log T
\]
and
\[
\sum_{0<\gamma_n \leq T} E_{2,\omega}(\gamma_n)
 = \int_{\gamma_1}^{T} E_{2,\omega}(t) dN_\omega(t) 
 \ll \int_{\gamma_1}^{T} E_{2,\omega}(t) (\log t) \, dt 
 \ll N_\omega(T) E_{2,\omega}(T). 
\]
Hence we obtain \eqref{0812_2}. 
\end{proof}

\subsection{Proof of Theorem \ref{thm_01} under RH} 

Put $\sigma=1/2+\omega$. By Proposition \ref{prop_1} and \ref{prop_2},
\begin{equation} \label{0817_1}
\aligned
\frac{1}{N_\omega(T)} \sum_{0<\gamma_n \leq T} \phi\left(\ddot{\gamma}_{n+1}-\ddot{\gamma}_n\right) 
&= \frac{1}{2T}\int_{-T}^{T} \phi\left(-\frac{1}{\pi}{\rm Re}\frac{\zeta'}{\zeta}(\sigma+it)\right) \, dt  \\
&\quad +O\left( \frac{\log\log T}{\log T}\right)+O(E_{2,\omega}(T))
\endaligned
\end{equation}
holds for large $T>0$, 
since ${\rm Re}(\zeta'/\zeta)(\sigma+it)$ is an even function of $t \in \mathbb R$. 

For any continuous and bounded function $\phi(x)$ on $\mathbb R$, $\phi({\rm Re}(z))$ is a continuous and bounded function on $\mathbb C$, 
because $z \mapsto \frac{1}{2}(z+\bar{z})$ is a continuous function from $\mathbb C$ into $\mathbb R$. 
Therefore, by applying  formula \eqref{formula_IM} to $\Phi(z)=\phi(-\tfrac{1}{\pi}{\rm Re}(z))$, 
we have 
\[
\lim_{T\to\infty}\frac{1}{2T}\int_{-T}^{T}\phi\left( -\frac{1}{\pi}{\rm Re}\frac{\zeta'}{\zeta}(\sigma+it) \right) \, dt
= \frac{1}{2\pi}\int_{-\infty}^{\infty}\pi m_\sigma(\pi u) \phi(u) \, du,
\]
since the $m$-function $m_\sigma(u)$ of \eqref{formula_IM2} is even. 
Hence we obtain 
\[
\lim_{T \to \infty}\frac{1}{N_\omega(T)} \sum_{0<\gamma_n \leq T} \phi\left(\ddot{\gamma}_{n+1}-\ddot{\gamma}_n\right) 
= \frac{1}{2\pi}\int_{-\infty}^{\infty}\pi m_\sigma(\pi u) \phi(u) \, du,
\]
since $\lim_{T \to \infty} E_{2,\omega}(T)=0$ for any fixed $\omega>0$. 
This implies \eqref{0731_2} by $\gamma_n^{(2)}=\varrho_\omega^{-1/2} \ddot{\gamma}_n$. 
\hfill $\Box$

\subsection{Proof of Theorem \ref{thm_01}} 
Let $X_\omega(s)$ be $A_\omega(s)$ or $B_\omega(s)$. 
We arrange the zero $\rho=\beta+i\gamma$ of $X_\omega(s)$ with $\gamma>0$ 
in a sequence $\rho_n=\beta_n+i\gamma_n$ so that $\gamma_{n+1} \geq \gamma_n$. 
Firstly, we recall that the numbers of zeros of $X_\omega(s)$ up to height $T$ and outside the line $\sigma=1/2$ 
are bounded by $T^{1-a\omega}(\log T)^2$ for any $a < 1$ (\cite[Theorem 1]{Li09}). 
In addition, for given $0<\delta<1$ and $B>0$, 
we can take an open subset $E \subset (0,\infty)$ such that 
\begin{itemize} 
\item the measure of $[T,2T] \cap E$ is bounded by $T/(\log T)^B$ for every $T \geq 2$,
\item the number of zeros of $X_\omega(1/2+it)$ 
for $t \in [T,2T]$ is bounded by $T/(\log T)^B$ for every $T \geq 2$, 
\item the zeros of $X_\omega(1/2+it)$ for $t \in [T,2T] \setminus E$ are simple, 
\item $[\gamma_n,\gamma_{n+1}] \subset [T,2T]\setminus E$ if $\gamma_n \in [T,2T] \setminus E$, 
\item $\gamma_{n+1} - \gamma_{n} = O(1/\log T)$ if $\gamma_n \in [T,2T] \setminus E$, 
\item $S_\omega(t)$ is of $C^\infty$ class in $(0,\infty) \setminus E$, 
\item the estimate 
\begin{equation} \label{0906_1}
\frac{\zeta'}{\zeta}(\tfrac{1}{2}+\omega+it) \ll (\log T)^{1-\delta}
\end{equation}
holds for $t \subset [T,2T]\setminus E$, 
\end{itemize} 
by \cite[Theorem 1]{Li09} and the proof of \cite[Theorem 2]{Li09}. 
Therefore, we have
\[
\lim_{T \to \infty} \frac{1}{N_\omega(T)}\sum_{0<\gamma_n \leq T}f(\gamma_{n})
= \lim_{T \to \infty} \frac{1}{N_\omega(T)}\sum_{{0<\gamma_n \leq T}\atop{\gamma_n \not\in E}}f(\gamma_{n}). 
\]
Using \eqref{0906_1} instead of \eqref{0809_7} 
for a calculation of the right-hand side, 
we obtain \eqref{0810_3}, \eqref{0812_2} and \eqref{0817_1} by replacing $E_{2,\omega}(t)$ by $(\log t)^{-\delta}$ 
in a way similar to the conditional proof of Theorem \ref{thm_01}.  
Hence we obtain Theorem \ref{thm_01}. 
\hfill $\Box$

\subsection{Proof of Theorem \ref{thm_02}} 
Let $\mu_\sigma$ be the variance of $M_\sigma(z)$: 
\begin{equation} \label{0821_2}
\mu_\sigma = \frac{1}{2\pi}\int_{\mathbb C}M_\sigma(z)|z|^2\,dudv. 
\end{equation}
Then we have 
\[
\varrho_\omega =\frac{1}{2\pi^2}\,\mu_{\sigma}
\]
for $\sigma=1/2+\omega$ by \cite[(4.1.8), (4.2.1)]{IM2} or \cite[(1.2.17), (1.2.21)]{Ihara02}. 
Thus, 
by using the Fourier inversion formula
\[
M_\sigma(u+iv)  = \frac{1}{2\pi}\int_{\mathbb C}\tilde{M}_\sigma(x+iy)e^{-i(xu+yv)} dxdy,
\]
we obtain
\[
\aligned
\lim_{\omega \to 0^+} \pi \varrho_{\omega}^{1/2}m_{\frac{1}{2}+\omega}(\pi \varrho_{\omega}^{1/2}u) 
& = \lim_{\omega \to 0^+} \frac{1}{2}\int_{-\infty}^{\infty}
\mu_{\sigma}M_{\sigma}(\mu_{\sigma}^{1/2}\frac{u+iv}{\sqrt{2}}) \, dv \\
& = \lim_{\omega \to 0^+} \int_{-\infty}^{\infty}\tilde{M}_\sigma(\sqrt{2}{\mu_{\sigma}^{-1/2}}x)e^{-ix u}\, dx.
\endaligned
\]
The integrand of the right-hand side is estimated as 
\[
|\tilde{M}_\sigma(\sqrt{2}{\mu_{\sigma}^{-1/2}}z)|
\leq \exp(-\sqrt{2}|z|/8)
\]
as well as \cite[(2.4.2)]{Ihara02} if $\sigma$ is sufficiently close to $1/2$. 
Therefore, by applying Lebesgue's convergence theorem to the right-hand side together with 
\[
\lim_{\sigma \to 1/2}\tilde{M}_\sigma(\mu_{\sigma}^{-1/2}z)=\exp(-|z|^2/4),  
\]
which is a special case of \cite[Lemma A]{Ihara02},
we obtain 
\[
\lim_{\sigma \to 1/2}\int_{-\infty}^{\infty}\tilde{M}_\sigma(\sqrt{2}{\mu_{\sigma}^{-1/2}}x)e^{-ixu}\, dx
= \int_{-\infty}^{\infty} \exp(-x^2/2) \, e^{-ixu}\, dx
= \sqrt{2\pi} \exp\left( -\frac{u^2}{2} \right). 
\]
This implies Theorem \ref{thm_02}. \hfill $\Box$ 

\section{Concluding remarks}

Before concluding the main parts of the paper, we give several comments and remarks. 

\subsection{On the range of test functions} 
In order to extend the range of test functions of formula \eqref{0731_2}, 
we need to extend the range of test functions of formula \eqref{formula_IM}. 
An optimistic expectation is that 
formula \eqref{formula_IM} 
holds for any continuous function $\Phi(z)$ on $\mathbb C$ 
or the characteristic function of either a compact subset of $\mathbb C$ or the complement of such a subset 
if we assume RH. 
However, the range of test functions of \eqref{formula_IM} could possibly be much more delicate problem. 
In fact, if we apply \eqref{formula_IM} formally to the test function $\Phi(w)=|w|^{2}$ 
together with \eqref{0821_3} below, we obtain
\[
\lim_{T \to \infty} \frac{1}{2T}\int_{-T}^{T} 
\left|\frac{\zeta'}{\zeta}(\sigma+it) \right|^2 \, dt 
= \sum_{n=1}^{\infty}\frac{\Lambda(n)^2}{n^{2\sigma}}=\mu_\sigma.
\]
This agree with the asymptotic formula 
\[
 \frac{1}{T}\int_{0}^{T} 
\left|\frac{\zeta'}{\zeta}(\sigma+it) \right|^{2} \, dt
\sim \sum_{n=1}^{\infty}\frac{\Lambda(n)^2}{n^{2\sigma}}
\]
for $(\sigma-1/2)\log T \to \infty$ which is followed from the estimate $S(T)=O(\log T/\log\log T)$ 
of Selberg \cite[(1.2)]{Sel}, 
where $f \sim g$ means that the ratio $f/g$ tends to one. 
It is easy to see that $\mu_\sigma \sim 1/(2\sigma-1)^2$ as $\sigma \to 1/2$. 
Thus, we obtain the asymptotic formula 
\[
 \frac{1}{T}\int_{0}^{T} 
\left|\frac{\zeta'}{\zeta}\left(\frac{1}{2}+\frac{a}{\log T}+it\right) \right|^{2} \, dt
\sim \frac{1}{4a^2} (\log T)^2
\]
as $a\to \infty$ and $T \to \infty$ with $a=o(\log T)$. 
On the other hand, Goldston--Gonek--Montgomery \cite{GGM} discovered that, assuming RH,
\[
 \frac{1}{T}\int_{0}^{T} 
\left|\frac{\zeta'}{\zeta}\left(\frac{1}{2}+\frac{a}{\log T}+it\right) \right|^{2} \, dt
\sim \frac{1-e^{-2a}}{4a^2} (\log T)^2
\]
as $T \to \infty$ for any fixed $a>0$ 
is equivalent to Montgomery--Odlyzko conjecture. 
The above facts do not contradict each other, 
but they suggest a need of careful consideration 
for the range of test functions when $\sigma$ close to $1/2$. 

\subsection{On the second normalization}
Applying \eqref{formula_IM} formally to the test function $\Phi(w)=|{\rm Re}(w)|^{2}=w^2+2w\bar{w}+\bar{w}^2$ 
together with \eqref{0821_3} below, we have
\[
\lim_{T \to \infty} \frac{1}{2T}\int_{-T}^{T} 
\left|{\rm Re} \frac{\zeta'}{\zeta}(\sigma+it) \right|^2 \, dt 
= \frac{1}{2} \sum_{n=1}^{\infty}\frac{\Lambda(n)^2}{n^{2\sigma}}.
\]
Therefore, by \eqref{0529_1}, \eqref{0808_1} and \eqref{0821_4}, we obtain 
\[
|\ddot{\gamma_{n+1}}-\ddot{\gamma_n}| \approx \left| \frac{1}{\pi}{\rm Re} \frac{\zeta'}{\zeta}(\sigma+i\xi_n) \right| 
\approx 
\varrho_{\sigma-1/2}
\]
on average in spite of \eqref{0809_7}. This is a reason on the normalizing factor $\varrho_\omega^{-1/2}$ of \eqref{0731_1}. 
The factor $(1/(2\pi))\log(\gamma_n/(2\pi e))$ of \eqref{0731_1} 
is a kind of technical adjustment to establish a bridge between 
the nearest-neighbour spacing of normalized zeros and the $M$-function.

\subsection{On a relation with Montgomery-Odlyzko conjecture} 
The functions $A_\omega(s)$ and $B_\omega(s)$ 
are holomorphic in $(\omega,s)$ as a function of two complex variables, 
and all their zeros are simple under RH if $\omega$ is a nonzero real number. 
Hence the sets of imaginary parts of $n$th zeros $ \{ \gamma_n(\omega)\,|\, \omega>0\}$ make 
analytic loci in $(0,\infty) \times (0,\infty)$, 
and they do not intersect each other. 
Moreover, assuming the simplicity of zeros of $\xi(s)$, 
$\lim_{\omega \to 0}\gamma_{n+1}(\omega)\not=\lim_{\omega \to 0}\gamma_n(\omega)$ 
for each $n \geq 1$. 
Therefore, we expect that the distribution of $\gamma_{n+1}^{(1)}(\omega)-\gamma_{n}^{(1)}(\omega)$ 
approximates well 
the distribution of the nearest-neighbor spacings $\gamma_{n+1}^{(1)}(0)-\gamma_{n}^{(1)}(0)$ 
if $\omega>0$ is small enough. 
In this sense, 
the distribution of 
$\ddot{\gamma}_{n+1}(\omega)-\ddot{\gamma}_{n}(\omega)$ 
should approximate
the distribution of 
$\gamma_{n+1}^{(1)}(0)-\gamma_{n}^{(1)}(0)-1$ 
up to a correction factor, since 
\[
\ddot{\gamma}_{n+1}(\omega)- \ddot{\gamma}_n(\omega)  \\
~\sim~\Bigl(\gamma_{n+1}^{(1)}(\omega) - \gamma_{n}^{(1)}(\omega) -1 \Bigr) \frac{1}{2\pi}\log\frac{\gamma_{n}(\omega)}{2\pi e}
\]
for large $n$ by \eqref{0809_6}. Moreover, we have 
\[
\gamma_{n+1}^{(2)}(\omega)- \gamma_n^{(2)}(\omega)  \\
~\sim~\Bigl(\gamma_{n+1}^{(1)}(\omega) - \gamma_{n}^{(1)}(\omega) -1 \Bigr) 
\]
when $\sqrt{2}\,\omega\log\gamma_{n}(\omega) \sim 1$ as $\omega \to 0^+$, 
since $\rho_\omega \sim 1/(8\pi^2 \omega^2)$ as $\omega \to 0^+$. 
Therefore, for small $\omega>0$, the distribution of $\gamma_{n+1}^{(2)}(\omega)-\gamma_{n}^{(2)}(\omega)$ 
around the height $\exp(1/\omega)$ 
approximate the $-1$ shift of the nearest-neighbor spacing distribution of $\gamma_{n+1}^{(1)}(0)-\gamma_{n}^{(1)}(0)-1$ 
in the same range. Conversely, the distribution of $\gamma_{n+1}^{(1)}(0)-\gamma_{n}^{(1)}(0)-1$ 
around a height $T>0$ is approximated by the distribution of $\gamma_{n+1}^{(2)}(\omega)-\gamma_{n}^{(2)}(\omega)$ 
for $\omega \sim 1/(\sqrt{2}\log T)$. 

However, the limit of the density function in Theorem \ref{thm_02} 
is quite different from 
a shift of the density function
\[
p(u) \approx \frac{32}{\pi^2}u^2\exp\left(-\frac{4}{\pi}u^2 \right) 
\]
of the nearest-neighbour spacing distribution for GUE 
predicted in the Montgomery-Odlyzko conjecture. 
In order to fill this gap, 
we may need a detailed study of the second error term of \eqref{0817_1}, which tends to $O(1)$ as $\omega \to 0^+$, 
and the effect of the normalizing factor $\varrho_\omega$ of \eqref{0731_1}. 

\subsection{On possible generalization} 
Let $L(s,f)$ be a self-dual $L$-function in a sense of 
Iwaniec--Kowalski \cite[Chap. 5]{IK} 
which includes Dedekind zeta-functions, 
Dirichlet $L$-functions associated to real primitive characters, 
Hecke $L$-functions associated to self-dual Hecke characters, 
automorphic $L$-functions associated to self-dual primitive holomorphic/Maass cusp forms, etc. 
For such $L$-function, a family of functions $A_\omega(s,f)$ and $B_\omega(s,f)$ corresponding to \eqref{AB} 
is defined as well, and it is established in a way similar to \cite{Jeff05} that 
the distribution of spacings of the normalized imaginary parts 
of the zeros of $A_\omega(s,f)$ and $B_\omega(s,f)$ 
converges to a limiting distribution of equal spacings of length one 
if we assume the Grand Riemann Hypothesis and the Ramanujan--Petersson conjecture for $L(s,f)$.
A key ingredient is an analogue of \eqref{0809_7} 
and other standard analytic properties of $L$-functions (see \cite[Chap. 5]{IK}). 
Therefore, an analogue of the second normalization \eqref{0731_1} is defined as well. 

However, an analogue of the $M$-function $M_\sigma(z)$ 
is not known except for the case of Dedekind zeta functions. 
It is an interesting problem to find an analogue of the function $M_\sigma(z)$ 
for $L(s,f)$, but it is not obvious what it is, 
even if it may not be hard to find an analogue of $M_\sigma(z)$ 
by a way similar to \cite{Ihara01} 
for degree one $L$-functions like Dirichlet/Hecke $L$-functions 
for real/self-dual characters. 

\subsection*{Acknowledgements}
The author would like to thank Lin Weng and Kohji Matsumoto for their interests and valuable comments on this work.   
The author is supported by KAKENHI (Grant-in-Aid for Young Scientists (B)) No. 25800007. 

\appendix

\section{$M$-function} 

In this part, we review a construction and basic properties of the $M$-function $M_\sigma(z)$ in formula \eqref{formula_IM} 
according to Ihara \cite{Ihara01, Ihara02} and Ihara--Matsumoto \cite{IM2}. 
See these references for details. 
\medskip

Let $\Lambda:{\mathbb N} \to {\mathbb R}$ be the von Mangoldt function, that is, 
$\Lambda(n)=\log p$ if $n=p^k$ for some prime number $p$ and integer $k \geq 1$, 
and $\Lambda(n)=0$ otherwise. 
We define arithmetic functions $\Lambda_k:{\mathbb N} \to {\mathbb R}$ by 
\[
\left( -\frac{\zeta'}{\zeta}(s) \right)^k 
= \left( \sum_{n=1}^{\infty} \frac{\Lambda(n)}{n^s}  \right)^k 
= \sum_{n=1}^{\infty} \frac{\Lambda_k(n)}{n^s} 
\]
for $k \geq 1$ and $\Lambda_0(n)=1$ if $n=1$, and $\Lambda_0(n)=0$ otherwise. 
For a positive integer $n$ and $z\in \mathbb C$, we define 
\[
\lambda_z(n) = \sum_{k=0}^{\infty}(-i/2)^k \frac{\Lambda_k(n)}{k!}z^k.
\]
The series converges absolutely and uniformly on every compact subset of $\mathbb C$, 
and it is a polynomial of $z$ by \cite[(3.8.5), (3.8.6)]{Ihara01}. 
Moreover, we have 
\begin{equation} \label{0819_1}
\lambda_{z}{}(mn)=\lambda_z(m)\lambda_z(n) \quad \text{if $(m,n)=1$}
\end{equation}
(\cite[Prop. 3.8.11(i)]{Ihara01}). For a prime number $p$ and complex numbers $s,z \in \mathbb C$, 
we define
\[
\tilde{M}_{s,p}(z)=\sum_{j=0}^{\infty} \frac{\lambda_{p^j}(z)\lambda_{p^j}(\bar{z})}{p^{2js}}
\]
The series converges absolutely for all $s$ with ${\rm Re}(s)>0$ 
and $z$ in a compact subset of $\mathbb C$ by \cite[prop. 3.9.4(i)]{Ihara01}. 
Using $\tilde{M}_{s,p}(z)$, we define $\tilde{M}_s(z)$ by the Euler product 
\begin{equation} \label{0821_1}
\tilde{M}_{s}(z) = \prod_{p}\tilde{M}_{s,p}(z),
\end{equation}
where $p$ runs over all prime numbers. 
The product converges for all $s$ with ${\rm Re}(s)>1/2$ 
and $z$ in a compact subset of $\mathbb C$ (\cite[Theorem 5]{Ihara01}). 
We have the Dirichlet series expansion 
\[
\tilde{M}_{s}(z)=\sum_{n=1}^{\infty} \frac{\lambda_{z}(n)\lambda_{\bar{z}}(n)}{n^{2s}}
\]
by \eqref{0819_1} 
and the series on the right-hand side converges absolutely all $s$ with ${\rm Re}(s)>1/2$ 
and $z$ in a compact subset of $\mathbb C$ by \cite[prop. 3.9.4(ii)]{Ihara01}. 

For $\sigma>1/2$ and $z \in \mathbb C$, $\tilde{M}_\sigma(z)$ is a real analytic function of $\sigma$ and $z$ 
which does not vanish identically, 
and 
satisfy $\tilde{M}_\sigma(z)=\tilde{M}_\sigma(\bar{z})=\overline{\tilde{M}_\sigma(-\bar{z})}$ 
and 
$\tilde{M}_\sigma(z)=O((1+|z|)^{-n})$ for any $n \geq 1$. 
The $M$-function in formula \eqref{formula_IM} is defined by the Fourier transform 
\[
M_\sigma(z) = \frac{1}{2\pi} \int_{\mathbb C}\tilde{M}_\sigma(w) \psi_{-z}(w) \, dw, 
\]
where $\psi_z(w)=\exp(i\cdot{\rm Re}(\bar{z}w))$. 
In addition, the $M$-function is real valued, decays rapidly as $|z| \to \infty$, and the Fourier inversion formula  
\[
\tilde{M}_\sigma(z) = \frac{1}{2\pi}\int_{\mathbb C}M_\sigma(w) \psi_z(w) \, dw
\]
holds with $\tilde{M}_\sigma(0)=1$ (\cite[Theorem 2 and 3, Remark 3.4.6]{Ihara01}). 
In particular, $(2\pi)^{-1}M_\sigma(w)dw$ is a probabilistic measure on $\mathbb C$. 
Corresponding to the Euler product \eqref{0821_1}, 
the $M$-function has a convolution Euler product 
whose $p$-factor being a certain distribution. 

We have 
\begin{equation} \label{0821_3}
\frac{1}{2\pi}\int_{\mathbb C}w^a\bar{w}^{b}M_\sigma(w) \, dw = \sum_{n=1}^{\infty}\frac{\Lambda_a(n)\Lambda_b(n)}{n^{2\sigma}}
\end{equation}
unconditionally 
together with the absolute convergence of the series 
if $\sigma >1$ (\cite[Theorem 6]{Ihara01}). 
Moreover, we have the limit formula
\[
\lim_{\sigma \to 1/2} \mu_\sigma M_\sigma(\mu_\sigma^{1/2} z) = 2 e^{-|z|^2}
\]
and the convergence is uniform on $|z| \leq R$ for any $R>0$,  
where $\mu_\sigma$ is the variance in \eqref{0821_2} (\cite[Theorem 2]{Ihara02}). 
Theorem 2 is a formal consequence of this formula.  
\medskip

Historically, formula \eqref{formula_IM} 
was obtained first in 1936 by Kershner--Wintner~\cite{KW37} for $\sigma>1/2$ 
in terms of asymptotic distribution functions 
as an analogue of a work of Jessen--Wintner for $\log \zeta(s)$ in 1935. 
However, they did not explicitly give the density function.  
The density function $M_\sigma(z)$ was constructed in 1937 by Kampen-Wintner~\cite{vKW37} for $\sigma>1$ 
as an infinite convolution Euler product. 
After that formula \eqref{formula_IM} was rediscovered by Guo~\cite{Guo} in 1993. 
He constructed $M_\sigma(z)$ for  $\sigma>1/2$ as the Fourier transform of the Euler product $\prod_p \tilde{M}_{\sigma,p}(z)$ 
but test functions in \eqref{formula_IM} are restricted to smooth and compactly supported functions. 
This restriction for the test functions was relaxed to a wider class of functions by Ihara--Matsumoto \cite{IM} in 2011 
which was a goal of a series of collaboration works of Ihara and Matsumoto standing on Ihara~\cite{Ihara01}. 
In 2008, Ihara~\cite{Ihara01} studied analytic and arithmetic properties of $M_\sigma(z)$ and $\tilde{M}_\sigma(s)$ systematically and in detail 
for $\sigma>1/2$ motivated by a study on Euler-Kronecker constants of global fields. 
This work was refined in Ihara \cite{Ihara02}. 
The formulation of \eqref{formula_IM} in the introduction depends on \cite[Theorem 6]{Ihara01} and \cite{IM}.
%

\end{document}